\newtheorem{lem}{Lemma}[section]
\newtheorem{thm}[lem]{Theorem}
\newtheorem{prop}[lem]{Proposition}
\numberwithin{equation}{section}
\newtheorem*{cor*}{Corollary}
\newtheorem*{thm*}{Theorem}
\theoremstyle{definition}
\theoremstyle{remark}
\newtheorem{rem}[lem]{Remark}
\newcommand{\Z}{\mathbb{Z}}
\newcommand{\R}{\mathbb{R}}
\newcommand{\lf}{\left\lfloor}
\newcommand{\rf}{\right\rfloor}
\newcommand{\lc}{\left\lceil}
\newcommand{\rc}{\right\rceil}
\renewcommand{\lvert}{\left\vert}
\renewcommand{\rvert}{\right\vert}
\title{Construction of normal numbers via generalized prime power sequences}
\author[M. G. Madritsch]{Manfred G. Madritsch}
\address[M. G. Madritsch]{Department for Analysis and Computational Number
  Theory\\Graz University of Technology\\A-8010 Graz, Austria}
\email{madritsch@math.tugraz.at}
\author[R. F. Tichy]{Robert F. Tichy}
\address[R. F. Tichy]{Department for Analysis and Computational Number
  Theory\\Graz University of Technology\\A-8010 Graz, Austria}
\email{tichy@tugraz.at}
\subjclass[2010]{11N37 (11A63)}
\keywords{normal number, pseudo-polynomial}
\dedicatory{Dedicated to Jean-Paul Allouche on the occasion of his 60\textsuperscript{th} birthday}
\date{\today}
\begin{document}

\begin{abstract}
In the present paper the authors construct normal numbers in base $q$
by concatenating $q$-adic expansions of prime powers $\lf\alpha
p^\theta\rf$ with $\alpha>0$ and $\theta>1$.
\end{abstract}

\maketitle

\section{Introduction}

Let $q\geq 2$ be a fixed integer and $\sigma=0.a_1a_2\dots$ be the
$q$-ary expansion of a real number $\sigma$ with $0<\sigma<1$. We
write $d_1\cdots d_\ell\in\{0,1,\dots,q-1\}^\ell$ for a block of $\ell$
digits in the $q$-ary expansion. By $\mathcal{N}(\sigma;d_1\cdots
d_\ell;N)$ we denote the number of occurrences of the block $d_1\cdots
d_\ell$ in the first $N$ digits of the $q$-ary expansion of $\sigma$.
We call $\sigma$ normal to the base $q$ if for every
fixed $\ell\geq 1$
\begin{align*}
\mathcal{R}_N(\sigma)=\mathcal{R}_{N,\ell}(\sigma)= \sup_{d_1\cdots
d_\ell}\lvert\frac{1}{N}\mathcal{N}(\sigma;d_1\cdots d_\ell;N)
  -\frac{1}{q^\ell}\rvert=o(1)
\end{align*}
as $N\rightarrow\infty$, where the supremum is taken over all
blocks $d_1\cdots d_\ell\in\{0,1,\dots,q-1\}^\ell$.

A slightly different, however equivalent definition of normal numbers is due to
Borel \cite{Borel1909:les_probabilites_denombrables} who also showed that
almost all numbers are normal (with respect to the Lebesgue measure) to any
base. However, despite their omnipresence among the reals, all numbers
currently known to be normal are established by ad hoc constructions. In
particular, we do not know whether given numbers, such as $\pi$, $e$, $\log 2$
and $\sqrt 2$, are normal.

In this paper we consider the construction of normal numbers in
base $q$ as concatenation of $q$-ary integer parts of certain
functions. A first result was achieved by
Champernowne~\cite{Champernowne1933:construction_decimals_normal}, who showed that 
\begin{align*}
0.1\,2\,3\,4\,5\,6\,7\,8\,9\,10\,11\,12\,13\,14\,15\,16\,17\,18\,19\,20\dots
\end{align*}
is normal in base $10$. This construction can be easily
generalised to any integer base $q$. Copeland and Erd{\"o}s
\cite{copeland_erdoes1946:note_on_normal} proved that
\begin{align*}
0.2\,3\,5\,7\,11\,13\,17\,19\,23\,29\,31\,37\,41\,43\,47\,53\,59\,61\,67\dots
\end{align*}
is normal in base $10$.

This construction principle has been generalized in several directions. In
particular, Dumont and Thomas \cite{Dumont_Thomas1994:modifications_de_nombres}
used transducers in order to rewrite the blocks of the expansion of a given normal
number to produce another one. Such constructions using automata yield to
$q$-automatic numbers, i.e., real numbers whose $q$-adic representation
is a $q$-automatic sequence (cf. Allouche and Shallit
\cite{allouche_shallit2003:automatic_sequences}). By these means one can show
that for instance the number
\[
\sum_{n\geq0}3^{-2^n}2^{-3^{2^n}}
\]
is normal in base 2. 

In the present paper we want to use another approach to generalize
Champernowne's construction of normal numbers. In particular, let $f$ be any
function and let $[f(n)]_q$ denote the base $q$ expansion of the integer part of
$f(n)$. Then define 
\begin{equation}\label{normal}
\begin{split}
\sigma_q&=\sigma_q(f)=
  0.\lf f(1)\rf_q\lf f(2)\rf_q\lf f(3)\rf_q \lf f(4)\rf_q \lf f(5)\rf_q \lf f(6)\rf_q \dots,
\end{split}
\end{equation}
where the arguments run through all positive integers. Champernowne's example
corresponds to the choice $f(x)=x$ in \eqref{normal}. Davenport and Erd{\"o}s 
\cite{davenport_erdoes1952:note_on_normal} considered the case where $f(x)$ is
an integer valued polynomial and showed that in this case the number
$\sigma_q(f)$ is normal. This construction was subsequently extended to
polynomials over the rationals and over the reals by Schiffer
\cite{schiffer1986:discrepancy_normal_numbers} and Nakai and 
Shiokawa~\cite{Nakai_Shiokawa1992:discrepancy_estimates_class}, who were both
able to show that $\mathcal{R}_N(\sigma_q(f))=\mathcal{O}(1/\log N)$. This
estimate is best possible as it was proved by Schiffer 
\cite{schiffer1986:discrepancy_normal_numbers}. Furthermore Madritsch et al.
\cite{Madritsch_Thuswaldner_Tichy2008:normality_numbers_generated} gave a
construction for $f$ being an entire function of bounded logarithmic order.

Nakai and Shiokawa \cite{Nakai_Shiokawa1990:class_normal_numbers} constructed
a normal number by concatenating the integer part of a pseudo-polynomial
sequence, i.e., a sequence $(\lf p(n)\rf)_{n\geq1}$ where
\begin{gather}\label{mani:pseudopoly}
  p(x)=\alpha_0 x^{\theta_0}+\alpha_1x^{\theta_1}+\cdots+\alpha_dx^{\theta_d}
\end{gather}
with $\alpha_0,\theta_0,\ldots,\alpha_d,\theta_d\in\R$, $\alpha_0>0$,
$\theta_0>\theta_1>\cdots>\theta_d>0$ and at least one
$\theta_i\not\in\Z$.

This method of construction by concatenating function values is in strong
connection with properties of $q$-additive functions. We call a function $f$ strictly
$q$-additive, if $f(0)=0$ and the function operates only on the digits of the
$q$-adic representation, i.e.,
\[
  f(n)=\sum_{h=0}^\ell f(d_h)\quad\text{ for }\quad n=\sum_{h=0}^\ell d_hq^h.
\]
A very simple example of a strictly $q$-additive function is the sum of digits
function $s_q$, defined by
\[
  s_q(n)=\sum_{h=0}^\ell d_h\quad\text{ for }\quad n=\sum_{h=0}^\ell d_hq^h.
\]

Refining the methods of Nakai and Shiokawa the first author obtained the following result.
\begin{thm*}[{\cite[Theorem 1.1]{madritsch2012:summatory_function_q}}]
Let $q\geq2$ be an integer and $f$ be a strictly $q$-additive function. If $p$ is a
pseudo-polynomial as defined in (\ref{mani:pseudopoly}), then there exists
$\varepsilon>0$ such that
\begin{gather}\label{mani:mainsum}
  \sum_{n\leq N}f\left(\lf p(n)\rf\right)
  =\mu_fN\log_q(p(N))
  +NF\left(\log_q(p(N))\right)
  +\mathcal{O}\left(N^{1-\varepsilon}\right),
\end{gather}
where
\[
\mu_f=\frac1q\sum_{d=0}^{q-1}f(d)
\]
and $F$ is a $1$-periodic function depending only on $f$ and $p$.
\end{thm*}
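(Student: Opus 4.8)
The plan is to reduce the statement about $\sum_{n\le N} f(\lf p(n)\rf)$ to a counting problem on digit-blocks, and then to estimate the relevant block counts via exponential sums attached to pseudo-polynomials. Write $L=\lf\log_q(p(N))\rf$ for the typical length of the $q$-adic expansion of $\lf p(n)\rf$ when $n\le N$; for all but $\mathcal O(N^{1-\varepsilon})$ exceptional values of $n$ the integer $\lf p(n)\rf$ has exactly $L$ or $L+1$ digits. By strict $q$-additivity, $f(\lf p(n)\rf)=\sum_{j=0}^{L} f(\varepsilon_j(\lf p(n)\rf))$ where $\varepsilon_j(m)$ is the $j$-th $q$-adic digit of $m$. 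Summing over $n\le N$ and swapping the order of summation, the main quantity becomes $\sum_{d=0}^{q-1} f(d)\sum_{j} \#\{n\le N:\varepsilon_j(\lf p(n)\rf)=d\}$. So everything comes down to showing that, for each fixed digit position $j$ in a suitable central range, the digit $\varepsilon_j(\lf p(n)\rf)$ is equidistributed in $\{0,\dots,q-1\}$ as $n$ runs over $[1,N]$, with a power-saving error term, and to controlling separately the $\mathcal O(1)$ lowest and highest digit positions (which produce the periodic term $F$).

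The second step is the harmonic-analytic core. The indicator that $\varepsilon_j(\lf p(n)\rf)=d$ is, up to the usual boundary issues, a function of the fractional part of $q^{-j-1}p(n)$ lying in an interval of length $1/q$; expanding this indicator in a Fourier series (with a Vaaler-type or Fej\'er-kernel smoothing to handle the discontinuities and the interplay with the floor function $\lf p(n)\rf$ versus $p(n)$) reduces the count to linear exponential sums $\sum_{n\le N} e(h q^{-j-1} p(n))$ for nonzero integers $h$. Because $p$ is a pseudo-polynomial with an irrational (non-integer) exponent $\theta_i$, these sums are amenable to the classical van der Corput / Weyl differencing machinery adapted to functions with non-integer exponents — this is exactly the input that Nakai and Shiokawa, and later the first author in \cite{madritsch2012:summatory_function_q}, developed. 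One obtains a bound of the shape $\mathcal O(N^{1-\delta})$ uniform in $h$ over the relevant (geometrically growing, so only $\mathcal O(\log N)$ many) scales $q^{-j-1}$, provided $h$ is not too large; the tail in $h$ is absorbed using the decay of the Fourier coefficients of the smoothed indicator. Assembling these estimates over the $\mathcal O(\log N)$ central digit positions yields $\sum_{n\le N} f(\lf p(n)\rf) = \mu_f N L + (\text{contribution of extreme digits}) + \mathcal O(N^{1-\varepsilon})$, where $\mu_f=\frac1q\sum_{d=0}^{q-1}f(d)$ is the average digit weight and $NL$ is replaced by $N\log_q(p(N))$ at the cost of $\mathcal O(N^{1-\varepsilon})$.

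The third step is to identify the periodic main term. Writing $\log_q(p(N)) = L + \{\log_q(p(N))\}$, the difference $N\log_q(p(N)) - NL$ contributes $N\cdot\{\log_q(p(N))\}\cdot\mu_f$, while the finitely many top digits of $\lf p(n)\rf$ depend (for most $n$) only on the leading behaviour of $p(n)$, hence only on $\{\log_q(p(n))\}$, which is itself governed by $\{\log_q(p(N))\}$ up to $\mathcal O(N^{1-\varepsilon})$ many $n$. Collecting all terms that depend only on $t=\log_q(p(N))\bmod 1$ defines the $1$-periodic function $F(t)$; it is a finite combination of the extreme-digit contributions and the fractional-part term, manifestly depending only on $f$ and $p$. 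One should check $F$ is well-defined and bounded (it is a finite sum of bounded pieces) — continuity is not claimed and not needed.

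The main obstacle I expect is the uniformity of the exponential-sum estimate across the full range of scales $q^{-j-1}$ for $j$ up to roughly $\log_q(p(N))$: when $q^{-j-1}$ is very small the ``frequency'' $hq^{-j-1}$ is tiny and the van der Corput estimates degrade, so one must carefully split the digit positions into a low range (handled by the fact that $p(n)\bmod q^{j+1}$ is already equidistributed for $j$ small relative to $\log N$, using the non-integer exponent), a central range (the generic exponential-sum regime), and a high range of $\mathcal O(1)$ positions (handled deterministically, feeding the periodic term). Balancing these three regimes to get a single $\varepsilon>0$ with a clean $\mathcal O(N^{1-\varepsilon})$ error is the delicate bookkeeping; the actual differencing inequalities are then standard, following \cite{Nakai_Shiokawa1990:class_normal_numbers} and \cite{madritsch2012:summatory_function_q}.
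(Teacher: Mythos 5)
This theorem is not actually proved in the present paper: it is quoted verbatim from the first author's earlier work \cite{madritsch2012:summatory_function_q} as background for the prime-power analogue, so there is no in-paper proof to compare against line by line. The closest internal analogue is the treatment of Proposition \ref{mani:centralprop} and the deduction of Theorem \ref{thm:summatoryfun}, and your overall architecture --- write $f(\lf p(n)\rf)$ as a sum over digit positions using $f(0)=0$, expand the digit indicator in a Vinogradov/Vaaler-type Fourier series, estimate the resulting sums $\sum_{n\le N}e(hq^{-j-1}p(n))$ by van der Corput's method, and split the digit positions into ranges --- does match both that argument and the cited reference.

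There is, however, a concrete error at the start of your reduction, and it propagates into your identification of the periodic term $F$. You claim that all but $\mathcal{O}(N^{1-\varepsilon})$ of the $n\le N$ give $\lf p(n)\rf$ exactly $L$ or $L+1$ digits, where $L=\lf\log_q p(N)\rf$. This is false: since $\ell(\lf p(n)\rf)=\theta_0\log_q n+\log_q\alpha_0+\mathcal{O}(1)$, the set of $n\le N$ with $\ell(\lf p(n)\rf)\le L-k$ has cardinality $\asymp Nq^{-k/\theta_0}$, so a fixed positive proportion of the $n$ realizes each of the lengths $L,L-1,L-2,\dots$, with geometrically decaying proportions. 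The identity $f(\lf p(n)\rf)=\sum_j f(\varepsilon_j(\lf p(n)\rf))$ survives because $f(0)=0$, but the digit $\varepsilon_j(\lf p(n)\rf)$ is identically $0$ on the positive-proportion set $\{n\le N: p(n)<q^j\}$ and is only equidistributed on its complement. Hence the correct main term at position $j$ is $\mu_f\bigl(N-p^{-1}(q^j)\bigr)$ rather than $\mu_f N$, and summing over $j\le J$ yields $\mu_f NJ-\mu_f\sum_{j\le J}p^{-1}(q^j)$, where the second sum is a geometric-type series of exact order $N$ whose value, after dividing by $N$, is (up to $\mathcal{O}(N^{-\varepsilon})$) a $1$-periodic function of $\log_q p(N)$, roughly a constant times $q^{-\{\log_q p(N)\}/\theta_0}$. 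This --- not ``finitely many top digit positions'' and not the bookkeeping term $\mu_f N\{\log_q p(N)\}$ alone --- is the source of $F$; it receives a contribution of order $Nq^{-k/\theta_0}$ from every one of the top $\Theta(\log N)$ positions, so it cannot be dismissed as an $\mathcal{O}(1)$-position boundary effect, and the related assertion that $\{\log_q p(n)\}$ is ``governed by'' $\{\log_q p(N)\}$ for most $n\le N$ is also untrue. Your exponential-sum core (step two) is the right tool and matches the reference, but as written your assembly of the main term would leave an unaccounted $\Theta(N)$ discrepancy, which is precisely the term the theorem names $NF(\log_q p(N))$.
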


The aim of the present paper is to extend the above results to prime power
sequences. Let $f$ be a function and set
\begin{gather}\label{mani:tau}
\tau_q=\tau_q(f)=0.\lf f(2)\rf_q \lf f(3)\rf_q \lf f(5)\rf_q \lf f(7)\rf_q \lf f(11)\rf_q \lf f(13)\rf_q \dots,
\end{gather}
where the arguments of $f$ run through the sequence of primes.

Letting $f$ be a polynomial with rational coefficients, Nakai and Shiokawa
\cite{Nakai_Shiokawa1997:normality_numbers_generated} could show that
$\tau_q(f)$ is normal. Moreover, letting $f$ be an entire function of bounded
logarithmic order, Madritsch $et al.$
\cite{Madritsch_Thuswaldner_Tichy2008:normality_numbers_generated} showed that
$\mathcal{R}_N(\tau_q(f))=\mathcal{O}(1/\log N)$.

At this point we want to mention the connection of normal numbers with uniform
distribution. In particular, a number $x\in[0,1]$ is normal to base $q$ if and
only if the sequence $\{q^nx\}_{n\geq0}$ is uniformly distributed modulo 1
(cf. Drmota and Tichy
\cite{drmota_tichy1997:sequences_discrepancies_and}). Here $\{y\}$
stands for the fractional part of $y$. Let us mention
Kaufman \cite{kaufman1979:distribution_surd_p} and
Balog \cite{balog1985:distribution_p_heta,balog1983:fractional_part_p}, who
investigated the distribution of the fractional part of $\sqrt p$ and
$p^\theta$ respectively. Harman \cite{harman1983:distribution_sqrtp_modulo}
gave estimates for the discrepancy of the sequence $\sqrt p$. In his papers
Schoissengeier~\cite{schoissengeier1979:connection_between_zeros,
  schoissengeier1978:neue_diskrepanz_fuer} connected the estimation of the
discrepancy of $\alpha p^\theta$ with zero free regions of the Riemann zeta
function. This allowed Tolev
\cite{tolev1991:simultaneous_distribution_fractional} to consider the
multidimensional variant of this problem as well as to provide an explicit
estimate for the discrepancy. This result was improved for different special
cases by Zhai \cite{zhai2001:simultaneous_distribution_fractional}. Since the
results above deal with the case of $\theta<1$ Baker and Kolesnik
\cite{baker_kolesnik1985:distribution_p_alpha} extended these considerations to
$\theta>1$ and provided an explicit upper bound for the discrepancy in this
case. This result was improved by Cao and Zhai
\cite{cao_zhai1999:distribution_p_alpha} for $\frac53<\theta<3$. A
multidimensional extension is due to Srinivasan and Tichy
\cite{srinivasan_tichy1993:uniform_distribution_prime}.

Combining the methods for proving uniform distribution mentioned above with a
recent paper by Bergelson et al.
\cite{bergelson_kolesnik_madritsch+2012:uniform_distribution_prime} we want to 
extend the construction of Nakai and Shiokawa
\cite{Nakai_Shiokawa1990:class_normal_numbers} to prime numbers. Our first main
result is the following theorem.

\begin{thm}\label{thm:normal}
Let $\theta>1$ and $\alpha>0$. Then
\[
\mathcal{R}_N(\tau_q(\alpha x^\theta))=\mathcal{O}(1/\log N).
\]
\end{thm}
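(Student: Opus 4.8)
The plan is to follow the classical Nakai--Shiokawa scheme for estimating the block-counting function $\mathcal N(\tau_q;d_1\cdots d_\ell;N)$, but replacing the exponential-sum input over all integers by exponential sums over primes. First I would reduce the counting of digit blocks to the study of the sequence $\{\alpha p^\theta q^{-j}\}$ for the primes $p$ in a dyadic-type range and for $j$ ranging over the relevant digit positions. Concretely, if the first $N$ digits of $\tau_q$ come from the concatenated expansions of $\lf\alpha p^\theta\rf$ for $p$ up to some bound $P$, then $P$ and $N$ are linked by $N\asymp \sum_{p\le P}\log_q(\alpha p^\theta)\asymp \theta\,\pi(P)\log_q P$, so $\log N\asymp\log P$; hence a bound of the shape $\mathcal R_N=\mathcal O(1/\log P)$ is exactly what is required. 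The occurrence of a block $d_1\cdots d_\ell$ at a given position inside $\lf\alpha p^\theta\rf_q$ is detected, up to boundary terms, by a condition of the form $\{\alpha p^\theta q^{-j}\}\in I$ for an interval $I$ of length $q^{-\ell}$, summed over the admissible $j$ (one for each digit of $\lf\alpha p^\theta\rf$) and over $p\le P$.

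The key step is then an Erd\H os--Tur\'an / Koksma-type argument: the discrepancy of $(\{\alpha p^\theta q^{-j}\})_{p\le P}$ is controlled by exponential sums $\sum_{p\le P}e(h\alpha q^{-j}p^\theta)$ for $1\le |h|\le H$. For $\theta>1$ these are precisely the sums treated by Baker--Kolesnik \cite{baker_kolesnik1985:distribution_p_alpha} and, in sharper ranges, by Cao--Zhai \cite{cao_zhai1999:distribution_p_alpha}; more importantly, the uniform (in the coefficient) estimates needed here are supplied by Bergelson et al.\ \cite{bergelson_kolesnik_madritsch+2012:uniform_distribution_prime}, whose results are tailored to pseudo-polynomial phases over primes. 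Using Vaughan's identity to split the sum over primes into type-I and type-II bilinear pieces and then applying Weyl/van der Corput differencing (or the relevant exponent-pair machinery) to the resulting smooth phase $h\alpha q^{-j}p^\theta$, one gets a saving of a fixed power: $\sum_{p\le P}e(h\alpha q^{-j}p^\theta)\ll P^{1-\delta}$ uniformly for $h$ and $j$ in the ranges that matter (in particular $q^{-j}\ge P^{-\theta+\eta}$, i.e.\ $j$ not too close to $\theta\log_q P$; the few exceptional top digits contribute only $\mathcal O(\log P)$ digits total and are absorbed into the error). Summing the Erd\H os--Tur\'an bound over the at most $\mathcal O(\log P)$ relevant values of $j$ and over $p\le P$ yields $\mathcal N(\tau_q;d_1\cdots d_\ell;N)=N q^{-\ell}+\mathcal O(N/\log N)$ uniformly in the block, which is the theorem.

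A few technical points must be handled along the way. One must deal with the $p$ for which $j$ is near the top of the expansion of $\lf\alpha p^\theta\rf$ (the leading digits), where the argument $\alpha p^\theta q^{-j}$ is no longer large and the exponential-sum estimate degrades; as noted these contribute $o(N)$ digits and are discarded. One must also account for blocks straddling the boundary between the expansions of $\lf\alpha p^\theta\rf$ and $\lf\alpha (p')^\theta\rf$ for consecutive primes $p<p'$: there are $\mathcal O(\pi(P))$ such junctions, hence $\mathcal O(\ell\,\pi(P))=\mathcal O(N/\log P)$ affected digit positions, again within the allowed error. Finally the passage from a bound valid when $N$ equals the exact length of a concatenation block to arbitrary $N$ costs only $\mathcal O(\log P)$ digits (the length of one prime-power expansion). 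The main obstacle is obtaining the exponential-sum estimate $\sum_{p\le P}e(h\alpha q^{-j}p^\theta)\ll P^{1-\delta}$ with a saving $\delta>0$ that is \emph{uniform} over the full range of $h\le H$ and of admissible $j$ simultaneously; this is exactly where the Vaughan-identity decomposition combined with the exponent-pair estimates of \cite{baker_kolesnik1985:distribution_p_alpha} and the uniform treatment in \cite{bergelson_kolesnik_madritsch+2012:uniform_distribution_prime} is indispensable, and it dictates the final quality $\mathcal O(1/\log N)$ of the discrepancy bound.
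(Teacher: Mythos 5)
Your overall strategy---reduce to the indicator condition $\{\alpha p^\theta q^{-j}\}\in I$, expand the indicator in a Fourier series with a cutoff $H$, and estimate $\sum_{p\le P}e(h\alpha q^{-j}p^\theta)$ via Vaughan's identity, Type I/II bilinear sums and Vinogradov/van der Corput differencing, with the bookkeeping for junction blocks and for the truncation of the last expansion---is exactly the route the paper takes. There is, however, one genuine gap: your treatment of the digit positions near the top of the expansion. You restrict the exponential-sum estimate to $q^{-j}\ge P^{-\theta+\eta}$ and claim the excluded positions ``contribute only $\mathcal O(\log P)$ digits total.'' That count is wrong: the excluded range $P^{\theta-\eta}<q^j\le P^{\theta}$ contains $\asymp\eta\log_q P$ values of $j$, and each such $j$ is a digit position for \emph{every} prime $p$ with $\alpha p^\theta\ge q^{j}$, so these positions carry $\asymp\eta\,\pi(P)\log_q P\asymp\eta N$ digits --- a positive proportion of all $N$ digits, not $o(N)$ and certainly not $\mathcal O(N/\log N)$. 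You cannot shrink $\eta$ with $N$ either, because the saving delivered by the Weyl/van der Corput step (Lemma \ref{nakshi:lem6} in the paper) requires the relevant derivative of the phase to lie in a window $[X^{-k-1+\rho},X^{-1}]$ with $\rho>0$ fixed, and for $q^j$ close to $P^{\theta}$ the derivative $\nu q^{-j}P^{\theta-k-1}$ drops out of this window; the bilinear machinery gives no saving there because the phase is essentially constant on the Type I/II ranges.

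The paper closes precisely this gap with a separate argument for the range $N^{\theta-1+\rho}<q^j\le N^{\theta}$ (its ``most significant digits'' subsection): it converts $\sum_{p\le N}e(\nu q^{-j}\alpha p^{\theta})$ into an oscillatory integral using the prime number theorem with a strong error term and partial summation, and then applies the stationary-phase bound of Titchmarsh (Lemma \ref{tit:lem4.19}) together with the second mean-value theorem, gaining the factor $1/\log N$ needed after summing over $\nu$ and $j$. Your proposal needs some such additional device for the leading digit positions; without it the claimed error term $\mathcal O(N/\log N)$, and hence the discrepancy bound $\mathcal O(1/\log N)$, does not follow.
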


\begin{rem}
This estimate is best possible as Schiffer \cite{schiffer1986:discrepancy_normal_numbers} showed.
\end{rem}

In our second main result we use the connection of this construction of normal
numbers with the arithmetic mean of $q$-additive functions as described above. Known
results in this area are due to Shiokawa \cite{shiokawa1974:sum_digits_prime},
who was able to show the following theorem.
\begin{thm*}[{\cite[Theorem]{shiokawa1974:sum_digits_prime}}]
We have
\[
\sum_{p\leq x}s_q(p)=\frac{q-1}2\frac x{\log
  q}+\mathcal{O}\left(x\left(\frac{\log\log x}{\log x}\right)^{\frac12}\right),
\]
where the sum runs over the primes and the implicit $\mathcal{O}$-constant may
depend on $q$.
\end{thm*}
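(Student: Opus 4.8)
The plan is to expand $s_q$ digit by digit, to extract the main term by a telescoping identity, and to reduce the error to exponential sums over primes, which are then handled by Vinogradov's method and the Siegel--Walfisz theorem. Write $s_q(n)=\sum_{j\ge0}\varepsilon_j(n)$ with $\varepsilon_j(n)=\lfloor n/q^j\rfloor-q\lfloor n/q^{j+1}\rfloor$, so that for $p\le x$ only $0\le j\le\nu:=\lfloor\log_q x\rfloor$ occur, and use $\lfloor t\rfloor=t-\tfrac12-\psi(t)$ (with $\psi(t)=\{t\}-\tfrac12$) to obtain the exact identity $\varepsilon_j(n)=\tfrac{q-1}{2}+q\psi(n/q^{j+1})-\psi(n/q^{j})$. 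Summing over $p\le x$ and over $0\le j\le\nu$, the $\psi$-terms telescope up to two boundary terms, both $O(x/\log x)$ (since $\psi(p)=-\tfrac12$ and $\psi(p/q^{\nu+1})$ is harmless), giving
\[
\sum_{p\le x}s_q(p)=\frac{(q-1)(\nu+1)}{2}\pi(x)+(q-1)\sum_{j=1}^{\nu}A_j+O\!\left(\frac{x}{\log x}\right),\qquad A_j:=\sum_{p\le x}\psi\!\left(\frac{p}{q^{j}}\right).
\]
As $\nu+1=\log x/\log q+O(1)$ and $\pi(x)=x/\log x+O(x/\log^{2}x)$, the first term is $\tfrac{q-1}{2}\cdot\tfrac{x}{\log q}+O(x/\log x)$, so it remains to show $\sum_{j=1}^{\nu}A_j\ll x(\log\log x/\log x)^{1/2}$.

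Here I would split the sum over $j$ into three ranges. For small $j$, say $q^{j}\le(\log x)^{A}$, the point is that $\{p/q^{j}\}=r/q^{j}$ whenever $p\equiv r\pmod{q^{j}}$, so $A_j=\sum_{0\le r<q^{j}}\psi(r/q^{j})\pi(x;q^{j},r)$; the involution $r\mapsto q^{j}-r$ shows $\sum_{\gcd(r,q)=1}\psi(r/q^{j})=0$, so the Siegel--Walfisz main term contributes nothing and $A_j\ll q^{j}x\exp(-c\sqrt{\log x})+1$, whose sum over this range is negligible. For large $j$, meaning $q^{j}$ within a fixed power of $\log x$ of $x$ (only $\ll\log\log x$ such indices), I would estimate $A_j$ directly by breaking $[1,x]$ into the $O(x/q^{j})$ aligned intervals of length $q^{j}$, comparing the $\psi$-sum over the primes in each block with its integral against $1/\log t$ via the prime number theorem, and exploiting that $\psi$ has mean zero on an interval of integer length, so that a block $[mq^{j},(m+1)q^{j})$ contributes only $O(q^{j}/(m\log^{2}x))$; these indices then contribute $O(x(\log\log x)^{2}/\log^{2}x)$, which is admissible.

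For the middle range I would approximate $\psi$ by a trigonometric polynomial of degree $H$ (a suitable power of $\log x$; I would use Vaaler's construction), thereby expressing $A_j$ as a $1/|h|$-weighted sum of the prime exponential sums $\sum_{p\le x}e(hp/q^{j})$ with $1\le h\le H$ (where $e(t)=e^{2\pi it}$), plus an approximation error of size $\ll x(\log H)/H$ per index. Writing $h=q^{\ell}h'$ with $\gcd(h',q)=1$, the reduced denominator of $hp/q^{j}$ is $q^{j-\ell}$, which for $j$ in the middle range lies in the interval where Vinogradov's estimate gives $\sum_{p\le x}e(hp/q^{j})\ll x(\log x)^{-C}$; when instead $q^{j-\ell}$ is small one uses Siegel--Walfisz for that fixed modulus, and one checks that a small reduced denominator forces $|h|$ (hence the weight $1/|h|$) to be correspondingly small, so that the $h\neq0$ contributions are negligible throughout. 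Summing the approximation errors over the $\ll\log_q x$ relevant scales yields $\ll(\log_q x)\,x(\log H)/H\ll x(\log\log x/\log x)^{1/2}$ once $H$ is chosen as the appropriate power of $\log x$; thus the exponent $\tfrac12$ in the error term comes out of the trigonometric truncation, accumulated over the $\approx\log x/\log q$ dyadic scales.

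The main obstacle is the uniform estimation of the prime exponential sums $\sum_{p\le x}e(hp/q^{j})$: one must carve off the indices $j$ with $q^{j}$ comparable to $x$, where Vinogradov's bound degenerates, and treat them by hand; and one must keep track of the Fourier frequencies $h$ that are highly divisible by $q$, where the relevant modulus collapses and only the weaker Siegel--Walfisz error is available. Everything else --- the digit identity, the telescoping, the vanishing of $\sum_{\gcd(r,q)=1}\psi(r/q^{j})$, and the comparison of block sums with integrals --- is routine, so the whole weight of the proof rests on the exponential sum input and on the bookkeeping of the parameters $A$, $C$, $H$ and the cutoff near $x$.
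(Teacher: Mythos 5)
This statement is not proved in the paper at all: it is Shiokawa's 1974 theorem, quoted verbatim (with citation) purely as background motivation for Theorem \ref{thm:summatoryfun}, so there is no in-paper argument to compare yours against. Judged on its own merits, your proposal is a sound and essentially classical route to the result: the digit-extraction identity $\varepsilon_j(n)=\frac{q-1}{2}+q\psi(n/q^{j+1})-\psi(n/q^{j})$ and its telescoping are correct, the main term $\frac{q-1}{2}\frac{x}{\log q}+O(x/\log x)$ comes out as you say, and the three-range treatment of $A_j=\sum_{p\le x}\psi(p/q^j)$ (Siegel--Walfisz with the involution $r\mapsto q^j-r$ for small moduli, Vaaler truncation plus Vinogradov's bound $\sum_{n\le x}\Lambda(n)e(a n/Q)\ll(\log x)^{O(1)}(xQ^{-1/2}+x^{4/5}+(xQ)^{1/2})$ in the middle range, direct comparison with $\mathrm{Li}$ near $Q\asymp x$) does close the argument. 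Carried out with $H$ a fixed power of $\log x$, it in fact yields an error of order $x\log\log x/\log x$, strictly stronger than Shiokawa's $x(\log\log x/\log x)^{1/2}$ (which reflects the weaker exponential-sum technology of 1974), and a stronger bound of course implies the stated one.

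One point of bookkeeping is off, though it does not endanger the conclusion. In the large-$j$ range your per-block bound $O(q^j/(m\log^2 x))$ uses that $\psi$ has mean zero over a \emph{complete} period; the final block $[\lfloor x/q^j\rfloor q^j,\,x]$ is incomplete, and its contribution is genuinely of size $q^j\Psi(\{x/q^j\})/\log x$ with $\Psi(u)=\int_0^u\psi$, i.e.\ up to $q^j/\log x$ per scale, not $O(q^j\log\log x/\log^2 x)$. So the stated total $O(x(\log\log x)^2/\log^2 x)$ for that range is not right; summing the geometric series $\sum_{q^j>x(\log x)^{-B}}q^j/\log x\ll x/\log x$ shows the correct total is $O(x/\log x)$, which is still well inside the target error. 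You should also make explicit that in the middle range the reduced denominator $q^{j-\ell}$ stays above $(\log x)^{A-2}$ once $q^\ell\mid h\le H=(\log x)^{O(1)}$, so the ``small reduced denominator'' fallback you describe is only needed if $A$ is not chosen large enough relative to $\log_q H$; with $A$ large it never occurs.
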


Similar results concerning the moments of the sum of digits function over
primes have been established by K\'atai \cite{katai1977:sum_digits_primes}. An
extension to Beurling primes is due to Heppner
\cite{heppner1976:uber_die_summe}. 

Let $\pi(x)$ stand for the number of primes less than or equal to
$x$. Adapting these ideas to our method we obtain the following theorem.
\begin{thm}\label{thm:summatoryfun}
Let $\theta>1$ and $\alpha>0$. Then
\[
\sum_{p\leq N}s_q(\lf\alpha p^\theta\rf)=\frac{q-1}2\pi(N)\log_qN^\theta+\mathcal{O}(\pi(N)),
\]
where the sum runs over the primes and the implicit $\mathcal{O}$-constant may
depend on $q$ and $\theta$.
\end{thm}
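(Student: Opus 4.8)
The plan is to derive Theorem~\ref{thm:summatoryfun} from Theorem~\ref{thm:normal} (in the case $\ell=1$) together with Chebyshev's classical bounds for $\pi$ and $\vartheta$, in the same way that Shiokawa's sum-of-digits asymptotic follows from the corresponding equidistribution statement. \emph{Reduction to a digit count.} For each prime $p$ the block $\lf\alpha p^\theta\rf_q$ consists of
\[
\ell(p):=\lf\log_q(\alpha p^\theta)\rf+1=\theta\log_q p+O(1)
\]
$q$-ary digits (for the finitely many $p$ with $\alpha p^\theta<1$ one sets $\ell(p)=1$, which is harmless). Put $D=D(N)=\sum_{p\le N}\ell(p)$. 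By the definition \eqref{mani:tau} of $\tau_q$, the first $D$ digits of $\tau_q(\alpha x^\theta)$ are \emph{exactly} the concatenation of the blocks $\lf\alpha p^\theta\rf_q$ over the primes $p\le N$, so no partial block is truncated. Hence, writing $\mathcal{N}_d$ for the number of occurrences of the digit $d$ among these $D$ digits, we have $\mathcal{N}_d=\mathcal{N}\bigl(\tau_q(\alpha x^\theta);d;D\bigr)$, $\sum_{d=0}^{q-1}\mathcal{N}_d=D$, and
\[
\sum_{p\le N}s_q\bigl(\lf\alpha p^\theta\rf\bigr)=\sum_{d=0}^{q-1}d\,\mathcal{N}_d .
\]

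\emph{Size of $D$.} Partial summation applied to Chebyshev's estimates gives $\vartheta(N)=\sum_{p\le N}\log p=\pi(N)\log N+O\bigl(N/\log N\bigr)$, together with $N/\log N\ll\pi(N)$, whence
\[
D=\frac{\theta}{\log q}\,\vartheta(N)+O(\pi(N))=\pi(N)\log_q\!\bigl(N^\theta\bigr)+O(\pi(N)).
\]
In particular $D\asymp\pi(N)\log N\asymp N$, so $\log D\asymp\log N$ and therefore $D/\log D=O(\pi(N))$.

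\emph{Equidistribution of the digits and conclusion.} By Theorem~\ref{thm:normal} with block length $\ell=1$,
\[
\mathcal{N}_d=\frac{D}{q}+O\!\left(\frac{D}{\log D}\right)=\frac{D}{q}+O(\pi(N))
\]
uniformly for $d\in\{0,1,\dots,q-1\}$. Substituting this into the identity from the reduction step and using $\sum_{d=0}^{q-1}d=\tfrac{q(q-1)}{2}$ yields
\[
\sum_{p\le N}s_q\bigl(\lf\alpha p^\theta\rf\bigr)=\frac{q-1}{2}\,D+O(\pi(N))=\frac{q-1}{2}\,\pi(N)\log_q\!\bigl(N^\theta\bigr)+O(\pi(N)),
\]
which is the assertion.

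Once Theorem~\ref{thm:normal} is available, the reduction is routine; the one point that genuinely needs care is the passage from the error term $O(D/\log D)$, which governs the first $D$ \emph{digits} of $\tau_q$, to the error term $O(\pi(N))$ in a sum over primes $p\le N$, and this rests exactly on the two-sided comparison $D\asymp\pi(N)\log N$ established above. (If one instead wanted a proof not relying on Theorem~\ref{thm:normal}, the real difficulty would move to the exponential sums over primes twisted by $\alpha p^\theta q^{-h}$ --- in the spirit of Shiokawa, Balog and Baker--Kolesnik, combined with the van der Corput--type input of Bergelson et al.\ --- needed to make each digit equidistributed among the $\lf\alpha p^\theta\rf_q$; Theorem~\ref{thm:normal} is precisely what packages that analytic work.)
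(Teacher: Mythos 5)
Your proof is correct, but it takes a different logical route from the paper. The paper does not derive Theorem~\ref{thm:summatoryfun} from Theorem~\ref{thm:normal}; instead it derives both theorems independently from the central Proposition~\ref{mani:centralprop}, which gives $\sum_{p\leq N}\mathcal{N}^*(\lf\alpha p^\theta\rf)=q^{-\ell}\pi(N)\log_qN^\theta+\mathcal{O}(N/\log N)$ for the zero-padded block count $\mathcal{N}^*$. The paper's proof of Theorem~\ref{thm:summatoryfun} then consists only of the identity $s_q(n)=\sum_{d=0}^{q-1}d\cdot\mathcal{N}(n;d)$, the comparison \eqref{mani:NtoNstar} between $\mathcal{N}$ and $\mathcal{N}^*$, and the proposition with $\ell=1$; no computation of the total digit length is needed because the main term $\pi(N)\log_qN^\theta$ is already present in the proposition. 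You instead treat Theorem~\ref{thm:normal} as a black box applied to the first $D$ digits of $\tau_q$, which forces you to supply the extra (and correctly handled) ingredients: the exact identification of the first $D=\sum_{p\le N}\ell(p)$ digits with the concatenated blocks, the asymptotic $D=\pi(N)\log_q(N^\theta)+\mathcal{O}(\pi(N))$ via $\vartheta(N)=\pi(N)\log N+\mathcal{O}(N/\log N)$, and the two-sided bound $D\asymp N$ to convert $\mathcal{O}(D/\log D)$ into $\mathcal{O}(\pi(N))$. Both arguments ultimately rest on the same weighted-digit-count identity and on the same underlying analytic work (the exponential sum estimates behind Proposition~\ref{mani:centralprop}); your version buys a cleaner modular dependence on the already-stated normality theorem at the cost of the bookkeeping for $D$, while the paper's version is shorter because the proposition is tailored to deliver the main term directly in the form required. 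There is no circularity in your derivation, since the paper proves Theorem~\ref{thm:normal} without reference to Theorem~\ref{thm:summatoryfun}.
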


\begin{rem}
With simple modifications Theorem \ref{thm:summatoryfun} can be extended to
completely $q$-additive functions replacing $s_q$.
\end{rem}

The proof of the two theorems is divided in three parts. In the following
section we  rewrite both statements and state the central theorem, which
combines them and which we  prove in the rest of the paper. In Section
\ref{sec:tools} we  present all the tools we  need in the proof of the
central theorem. Finally, in Section \ref{sec:proof-prop-refm} we  proof
the theorem.

\section{Preliminaries}\label{sec:preliminaries}
Throughout the paper, an interval denotes a set
\[
  I=(\alpha,\beta]=\{x:\alpha<x\leq\beta\}
  \quad\text{with}\quad\beta>\alpha\geq\frac12.
\]
We will often subdivide a interval into smaller ones. In particular we
use the observation that if $\log(\beta/\alpha)\ll\log N$, then
$(\alpha,\beta]$ is the union of, say, $s$ intervals of the type
$(\gamma,\gamma_1]$ with $s\ll\log N$ and $\gamma_1\leq2\gamma$. Given any
complex function $F$ on $I$, we have
\begin{gather}\label{bak:intervalsplit}
\lvert\sum_{x\in I}F(x)\rvert\ll(\log
N)\lvert\sum_{\gamma<x\leq\gamma_1}F(x)\rvert,  
\end{gather}
for some such $(\gamma,\gamma_1]$.

In the proof $p$ will always denote a prime. We fix the block
$d_1\cdots d_\ell$ and write $\mathcal{N}(f(p))$ for the number of occurrences of
this block in the $q$-ary expansion of $\lfloor f(p)\rfloor$. By $\ell(m)$ we
denote the length of the $q$-ary expansion of an integer $m$.

In the first step we want to get rid of the blocks that may occur between two
expansions. To this end we define an integer $N$ by
\begin{gather}\label{mani:P}
\sum_{p\leq N-1}\ell\left(\lfloor p^\theta\rfloor\right) <L\leq
\sum_{p\leq N}\ell\left(\lfloor p^\theta\rfloor\right),
\end{gather}
where $\sum$ indicates that the sum runs over all primes. Thus we get that
\begin{equation}\label{mani:NtoP}
\begin{split}
L&=\sum_{p\leq N}\ell(\lf p^\theta\rf)+\mathcal{O}(\pi(N))+\mathcal{O}(\theta \log_q(N))\\
&=\frac{\theta}{\log q}N+\mathcal{O}\left(\frac{N}{\log N}\right).
\end{split}\end{equation}
Here we have used the prime number theorem in the form
\[
  \pi(x)=\mathrm{Li}\, x+\mathcal{O}\left(\frac x{(\log x)^G}\right),
\]
where $G$ is an arbitrary positive constant and
\[
  \mathrm{Li}\,x=\int_2^x\frac{\mathrm{d}t}{\log t}.
\]
Let $\mathcal{N}(n;d_1\cdots d_\ell)$ be the number of occurrences of the block
$d_1\cdots d_\ell$ in the expansion of $n$. Since we have fixed the block
$d_1\cdots d_\ell$ we will write $\mathcal{N}(n)=\mathcal{N}(n;d_1\cdots
d_\ell)$ for short. Then \eqref{mani:NtoP} implies that
\begin{gather}\label{mani:Ntrunc}
  \lvert\mathcal{N}(\tau_q(x^\theta);d_1\cdots d_\ell;L)-\sum_{p\leq
    N}\mathcal{N}(p^\theta)\rvert\ll\frac L{\log L}.
\end{gather}

For the next step we collect all the values that have a certain length of
expansion. Let $j_0$ be a sufficiently large integer. Then for each
integer $j\geq j_0$ we get that there exists an $N_j$ such that
\[
  q^{j-2}\leq f(N_j)<q^{j-1}\leq f(N_j+1)<q^j.
\]
We note that this is possible since $f$ asymptotically grows as its leading
coefficient. This implies that
\[
  N_j\asymp q^{\frac j\beta}.
\]
Furthermore for $N\geq q^{j_0}$
we set $J$ to be the greatest length of the $q$-ary expansions of $f(p)$ over
the primes $p\leq N$, i.e.,
\begin{gather}\label{mani:JP}
J:=\max_{p\leq N}\ell(\lfloor f(p)\rfloor)=\log_q(f(N))+\mathcal{O}(1)\asymp\log
N.
\end{gather}

In the next step we want to perform the counting by adding the leading zeroes to
the expansion of $f(p)$. For $N_{j-1}<p\leq N_j$ we may write $f(p)$ in $q$-ary
expansion, i.e., 
\begin{gather*}
f(p)=b_{j-1}q^{j-1}+b_{j-2}q^{j-2}+\dots+b_{1}q+b_{0}+b_{-1}q^{-1}+\dots.
\end{gather*}
Then we denote by $\mathcal{N}^*(f(p))$ the number of occurrences of the block
$d_1,\ldots,d_\ell$ in the string $0\cdots0b_{j-1}b_{j-2}\cdots b_1b_0$, where we
filled up the expansion with zeroes such that it has length $J$. The error of
doing so can be estimated by 
\begin{equation}\label{mani:NtoNstar}\begin{split}
0&\leq\sum_{p\leq N}\mathcal{N}^*(f(p))-\sum_{p\leq N}\mathcal{N}(f(p))\\
&\leq\sum_{j=j_0+1}^{J-1}(J-j)\left(\pi(N_{j+1})-\pi(N_{j})\right)+\mathcal{O}(1)\\
&\leq\sum_{j=j_0+2}^{J}\pi(N_{j})+\mathcal{O}(1)\ll\sum_{j=j_0+2}^{J}\frac{q^{j/\beta}}j
\ll\frac N{\log N}\ll\frac L{\log L}.\\
\end{split}\end{equation}

In the following two sections we will estimate this sum of indicator functions
in order to prove the following proposition.
\begin{prop}\label{mani:centralprop}
Let $\theta>1$ and $\alpha>0$. Then
\begin{gather}\label{mani:centralprop:statement}
\sum_{p\leq
  N}\mathcal{N}^*\left(\lf \alpha p^\theta\rf\right)=q^{-k}\pi(N)\log_qN^\theta+\mathcal{O}\left(\frac{N}{\log
  N}\right)
\end{gather}
\end{prop}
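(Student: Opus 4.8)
The plan is to unfold $\mathcal N^*$ into a sum of digit indicators and then, for each digit position, count the primes producing the fixed block; throughout write $\ell$ for the block length (so $q^{-k}=q^{-\ell}$), and put $D=\sum_{i=0}^{\ell-1}d_{\ell-i}q^i$ for the integer encoded by $d_1\cdots d_\ell$. For an integer $m$, the block occurs at position $\lambda$ (counted from the right, $0\le\lambda\le J-\ell$) of the zero-padded length-$J$ expansion of $m$ exactly when $\lf m/q^\lambda\rf\equiv D\pmod{q^\ell}$, i.e.\ when $\{mq^{-\lambda-\ell}\}\in I:=[Dq^{-\ell},(D+1)q^{-\ell})$, an interval of length $q^{-\ell}$. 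Hence
\[
\sum_{p\le N}\mathcal N^*\!\left(\lf\alpha p^\theta\rf\right)=\sum_{\lambda=0}^{J-\ell}S_\lambda,
\qquad
S_\lambda:=\sum_{p\le N}\mathbf{1}\!\left[\left\{q^{-\lambda-\ell}\lf\alpha p^\theta\rf\right\}\in I\right].
\]
Replacing $\lf\alpha p^\theta\rf$ by $\alpha p^\theta$ moves the argument of $\{\cdot\}$ by less than $q^{-\lambda-\ell}$, so it affects $S_\lambda$ only through primes for which $\{q^{-\lambda-\ell}\alpha p^\theta\}$ lies within $q^{-\lambda-\ell}$ of an endpoint of $I$; these are counted by the same machinery used below and cost $\mathcal O(N/\log N)$ altogether. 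On the other side, the prime number theorem (for $\vartheta$ and for $\pi$) gives $\sum_{p\le N}\ell(\lf\alpha p^\theta\rf)=\pi(N)\log_qN^\theta+\mathcal O(N/\log N)$, and a trivial counting identity gives $\sum_{\lambda=0}^{J-\ell}\pi^\lambda(N)=\sum_{p\le N}\ell(\lf\alpha p^\theta\rf)+\mathcal O(N/\log N)$, where $\pi^\lambda(N)$ counts the primes $p\le N$ with $\ell(\lf\alpha p^\theta\rf)\ge\lambda+\ell$ --- equivalently, those for which position $\lambda$ falls inside the genuine (unpadded) expansion. Thus it is enough to prove
\[
\sum_{\lambda=0}^{J-\ell}\bigl(S_\lambda-q^{-\ell}\pi^\lambda(N)\bigr)=\mathcal O\!\left(\tfrac N{\log N}\right).
\]
First one peels off from each $S_\lambda$ the primes whose expansion length lies strictly between $\lambda$ and $\lambda+\ell$, together with (when $D=0$) the padding-zero occurrences coming from primes of expansion length $\le\lambda$: using $N_j\asymp q^{j/\theta}$, their total over all $\lambda$ is $\ll\sum_{j\le J}q^{j/\theta}/j\ll N/\log N$.

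Now split at $\lambda_0:=\lfloor(\theta-\delta)\log_qN\rfloor$ for a small fixed $\delta>0$. For $\lambda\le\lambda_0$ (the bulk digits) set $\alpha_\lambda=\alpha q^{-\lambda-\ell}$, so $N^{-\theta+\delta}\ll\alpha_\lambda\ll1$. In this range $(\{\alpha_\lambda p^\theta\})_{p\le N}$ is equidistributed with a power-saving discrepancy: the Erd\H{o}s--Tur\'an inequality reduces the estimation of $S_\lambda-q^{-\ell}\pi^\lambda(N)$ to bounding the Weyl sums over primes $\sum_{p\le N}e(k\alpha_\lambda p^\theta)$ (with $e(x)=e^{2\pi i x}$) for $1\le k\le N^{\delta'}$, and for $\theta>1$ these admit a uniform bound $\ll N^{1-\rho}$, $\rho=\rho(\theta)>0$, via Vaughan's identity combined with van der Corput's and Vinogradov's methods --- exactly the estimates underlying the discrepancy bounds of Baker--Kolesnik \cite{baker_kolesnik1985:distribution_p_alpha} and Bergelson--Kolesnik--Madritsch et al.\ \cite{bergelson_kolesnik_madritsch+2012:uniform_distribution_prime}, whose proofs are uniform in a leading coefficient in the window $[N^{-\theta+\delta},N^{\delta'}]$. (For $\theta\in\Z$ one uses Vinogradov's method on the minor arcs and, for the exceptional $\alpha$, the distribution of primes in arithmetic progressions, which recovers the polynomial case.) This yields $|S_\lambda-q^{-\ell}\pi^\lambda(N)|\ll\pi(N)N^{-\rho'}$, and summing over the $\mathcal O(\log N)$ values $\lambda\le\lambda_0$ leaves $\mathcal O(N/\log N)$, in fact far less.

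For $\lambda_0<\lambda\le J-\ell$ (the leading digits) the sequence $\{\alpha_\lambda p^\theta\}$ no longer equidistributes mod $1$, its range being bounded, so one argues directly. The relevant primes now lie in $(m_\lambda,N]$ with $m_\lambda\asymp q^{(\lambda+\ell)/\theta}\gg N^{1-\delta/\theta}$, and the congruence $\lf q^{-\lambda}\alpha p^\theta\rf\equiv D\pmod{q^\ell}$ confines $p$ to a union of at most $N^{\mathcal O(\delta)}$ intervals $(P_t,P_t']$, the shortest still of length $\gg N^{1-\delta}$; choosing $\delta$ small enough that this exceeds $N^{7/12+\varepsilon}$, a short-interval prime number theorem (Huxley) gives $\pi(P_t')-\pi(P_t)=\int_{P_t}^{P_t'}(\log t)^{-1}\mathrm dt+\mathcal O\bigl((P_t'-P_t)(\log N)^{-A}\bigr)$ for any $A$. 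Comparing $\sum_t(\pi(P_t')-\pi(P_t))$ with $q^{-\ell}(\pi(N)-\pi(m_\lambda))$ by a Riemann-sum estimate for the substitution $u\mapsto(q^\lambda u/\alpha)^{1/\theta}$ --- whose second derivative is $\ll u^{1/\theta-2}$ with $\theta>1$, so the Taylor remainders sum to $\ll m_\lambda/\log m_\lambda\asymp\pi(m_\lambda)$ --- gives $S_\lambda-q^{-\ell}\pi^\lambda(N)\ll\pi(m_\lambda)+N(\log N)^{-2}$. Since the $m_\lambda$ grow geometrically in $\lambda$, $\sum_{\lambda_0<\lambda\le J-\ell}\pi(m_\lambda)\ll\pi(m_{J-\ell})\ll\pi(N)$, so this range again contributes $\mathcal O(N/\log N)$. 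Adding the two ranges gives the displayed identity and hence the proposition.

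The main obstacle is the Weyl-sum estimate in the bulk range: a power saving is needed uniformly both in the multiplier $k$ (up to a small power of $N$) and in $\alpha_\lambda$ across the $\asymp\log N$ dyadic scales that occur --- this is precisely what forces the heavy analytic input (Vaughan's identity, van der Corput, Vinogradov). The secondary difficulty is calibration: one must choose $\lambda_0$, the Erd\H{o}s--Tur\'an cut-off $N^{\delta'}$ and all exponents so that the leading-range intervals stay long enough for the short-interval prime number theorem while every error term accumulates to no more than $\mathcal O(N/\log N)$.
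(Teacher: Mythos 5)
Your proposal is correct in outline and, for the bulk of the digit positions, follows the same architecture as the paper: decompose $\mathcal N^*$ over digit positions, smooth the indicator of the block into an exponential-sum problem (your Erd\H{o}s--Tur\'an step is interchangeable with the paper's use of Vinogradov's Fourier approximation, Lemma \ref{vin:lem12}), and then bound $\sum_{p\le N}e(\nu q^{-j}\alpha p^\theta)$ by Vaughan's identity (Lemma \ref{bakkol:vaughan}), Cauchy--Schwarz on the Type~II sums, and Vinogradov-type derivative tests (Lemma \ref{nakshi:lem6}); your split point $q^{\lambda}\approx N^{\theta-\delta}$ is in fact the one the paper effectively uses in its conclusion. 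Where you genuinely diverge is the most-significant-digit range: the paper stays inside the exponential-sum framework there, writing $S(N,j,\nu)$ as a Stieltjes integral against $\mathrm d\pi$, invoking the classical prime number theorem and the second mean-value theorem, and finishing with the $k$-th derivative test for exponential integrals (Lemma \ref{tit:lem4.19}) to get the bound $(\nu/q^j)^{-1/k}/\log N$; you instead observe that the digit condition confines $p$ to $N^{\mathcal O(\delta)}$ intervals each of length $\gg N^{1-\delta}$ and count primes in them with Huxley's short-interval theorem plus a Riemann-sum comparison. Both routes work; the paper's needs only the classical PNT and is shorter, while yours avoids exponential sums in that range at the cost of importing a much deeper input (primes in short intervals) and a somewhat delicate Taylor/Riemann-sum bookkeeping near the top positions, where your per-$\lambda$ error $\pi(m_\lambda)$ is essentially the trivial bound and is saved only by geometric summation. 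Two caveats: (i) your bulk-range argument defers the decisive point --- a power-saving bound on $\sum_{p\le N}e(k\alpha_\lambda p^\theta)$ uniform in $k\le N^{\delta'}$ and in $\alpha_\lambda$ over all $\asymp\log N$ scales --- to the literature, whereas this is precisely what the paper's Type~I/Type~II analysis proves, so in a complete write-up you would have to reproduce that computation; (ii) your parenthetical remark on integer $\theta$ correctly flags a case that the paper's derivative-test argument (with $k=\lceil2\theta\rceil+1$, hence $g^{(k+1)}\equiv0$) does not actually cover either.
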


\begin{proof}[Proof of Theorem \ref{thm:normal}]
We insert \eqref{mani:centralprop:statement} into \eqref{mani:Ntrunc}
and get the desired result.
\end{proof}

\begin{proof}[Proof of Theorem \ref{thm:summatoryfun}]
For this proof we have to rewrite the statement. In particular, we use that the
sum of digits function counts the number of $1$s, $2$s, etc. and
assigns weights to them, i.e.,
\[
s_q(n)=\sum_{d=0}^{q-1}d\cdot\mathcal{N}(n;d).
\]
Thus
\begin{align*}
\sum_{p\leq N}s_q(\lf p^\theta\rf)
&=\sum_{p\leq N}\sum_{d=0}^{q-1}d\cdot\mathcal{N}(p^\theta)
=\sum_{p\leq
  N}\sum_{d=0}^{q-1}d\cdot\mathcal{N}^*(p^\theta)+\mathcal{O}\left(\frac{N}{\log
    N}\right)\\
&=\frac{q-1}2\pi(N)\log_q(N^\theta)+\mathcal{O}\left(\frac{N}{\log
    N}\right)
\end{align*}
and the theorem follows.
\end{proof}

% In the following sections we will prove Proposition \ref{mani:centralprop} in
% several steps. We start in Section \ref{sec:rewriting-sum} by applying Fourier
% transform for the sum of $\mathcal{N}^*$. This transform will provide us with
% exponential sums over primes, which by an application of Vaughan's identity are
% rewritten into one involving products. As output from Vaughan's identity we
% have to estimate two kinds of sums: type 1 and type 2 sums. These will be dealt
% with in Section \ref{} and Section \ref{}, respectively. Finally in Section
% \ref{} we combine the estimates in order to obtain the proof.

\section{Tools}\label{sec:tools}

In this section we want to present all the tools we need on the way of proof of
Proposition \ref{mani:centralprop}. We start with an estimation which essentially goes
back to Vinogradov. This will provide us with Fourier expansions for the
indicator functions used in the proof. As usual given a real number
$y$, the expression $e(y)$ will stand for $\exp\{2\pi i y\}$.

\begin{lem}[{\cite[Lemma
    12]{vinogradov2004:method_trigonometrical_sums}}]\label{vin:lem12}
Let $\alpha$, $\beta$, $\Delta$ be real numbers satisfying
\begin{gather*}
0<\Delta<\frac12,\quad\Delta\leq\beta-\alpha\leq1-\Delta.
\end{gather*}
Then there exists a periodic function $\psi(x)$ with period $1$,
satisfying
\begin{enumerate}
\item $\psi(x)=1$ in the interval $\alpha+\frac12\Delta\leq x
  \leq\beta-\frac12\Delta$,
\item $\psi(x)=0$ in the interval $\beta+\frac12\Delta\leq x
  \leq1+\alpha-\frac12\Delta$,
\item $0\leq\psi(x)\leq1$ in the remainder of the interval
  $\alpha-\frac12\Delta\leq x\leq1+\alpha-\frac12\Delta$,
\item $\psi(x)$ has a Fourier series expansion of the form
  $$
  \psi(x)=\beta-\alpha+\sum_{\substack{\nu=-\infty\\\nu\neq0}}^\infty
   A(\nu) e(\nu x),
  $$
  where
  \begin{gather}\label{mani:A}
  \lvert A(\nu)\rvert \ll \min \left( \frac 1\nu,
  \beta-\alpha,\frac{1}{\nu^2\Delta} \right).
  \end{gather}
\end{enumerate}
\end{lem}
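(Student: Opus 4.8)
The plan is to obtain $\psi$ by smoothing the $1$-periodic indicator function of $[\alpha,\beta]$ through convolution with a short averaging kernel of width $\Delta$, so that the two jumps at $\alpha$ and $\beta$ are each spread out linearly over a window of length $\Delta$. This is the classical Vinogradov smoothing, and the three pointwise properties will fall out of a purely geometric description of the convolution, while property (4) will follow from the convolution theorem together with elementary estimates of the two factors.

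First I would let $\chi$ denote the $1$-periodic extension of the indicator function of $[\alpha,\beta]$; since $\beta-\alpha\leq1-\Delta<1$ the interval fits inside a single period, so $\chi$ is well defined. I would then introduce the normalized box kernel $k(x)=\Delta^{-1}$ for $\lvert x\rvert\leq\Delta/2$ and $k(x)=0$ otherwise, which is supported in $(-\tfrac12,\tfrac12)$ because $\Delta<\tfrac12$, and define $\psi$ as the periodic convolution $\psi(x)=\int_0^1\chi(x-t)k(t)\,dt$. Concretely, $\psi(x)$ equals the fraction of the window $[x-\tfrac12\Delta,x+\tfrac12\Delta]$ that lies in $[\alpha,\beta]$ modulo $1$.

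Next I would verify (1)--(3) directly from this description. If $\alpha+\tfrac12\Delta\leq x\leq\beta-\tfrac12\Delta$ the whole window lies inside $[\alpha,\beta]$, so $\psi(x)=1$; this interval is nonempty exactly because $\beta-\alpha\geq\Delta$. If $\beta+\tfrac12\Delta\leq x\leq1+\alpha-\tfrac12\Delta$ the window lies in $[\beta,1+\alpha]$, hence is disjoint from $[\alpha,\beta]$ modulo $1$, so $\psi(x)=0$; here nonemptiness is precisely the hypothesis $\beta-\alpha\leq1-\Delta$. In the two remaining transition windows of length $\Delta$ the value $\psi(x)$ is an overlap length divided by $\Delta$, so it lies in $[0,1]$, giving (3). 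Thus the only point requiring any care is that the two flat regions be nonempty, which is guaranteed exactly by the standing hypotheses $\Delta\leq\beta-\alpha\leq1-\Delta$; everything else is routine.

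Finally, for (4) I would apply the convolution theorem, which gives $A(\nu)=\hat\chi(\nu)\hat k(\nu)$. The zeroth coefficient is $\hat\chi(0)\hat k(0)=\beta-\alpha$, producing the stated main term. For $\nu\neq0$ one has, on the one hand, $\lvert\hat\chi(\nu)\rvert\leq\beta-\alpha$ trivially, and on the other, $\lvert\hat\chi(\nu)\rvert=\lvert\int_\alpha^\beta e(-\nu x)\,dx\rvert\leq(\pi\lvert\nu\rvert)^{-1}$, while $\hat k(\nu)=\sin(\pi\nu\Delta)/(\pi\nu\Delta)$ satisfies $\lvert\hat k(\nu)\rvert\leq\min\bigl(1,(\pi\lvert\nu\rvert\Delta)^{-1}\bigr)$. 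Combining $\lvert\hat\chi(\nu)\rvert\leq\min\bigl(\beta-\alpha,(\pi\lvert\nu\rvert)^{-1}\bigr)$ with $\lvert\hat k(\nu)\rvert\leq1$ yields the first two bounds in \eqref{mani:A}, and multiplying the two $1/\lvert\nu\rvert$-type estimates gives $\lvert A(\nu)\rvert\ll(\nu^2\Delta)^{-1}$, which is the third. Taking the minimum of the three completes the proof. No step here presents a genuine obstacle; the construction is elementary and the estimates are standard bounds for Fourier coefficients of indicator and box functions.
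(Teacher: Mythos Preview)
The paper does not prove this lemma; it is quoted from Vinogradov's monograph without proof, so there is no in-paper argument to compare against. Your construction---convolving the periodic indicator of $[\alpha,\beta]$ with a box kernel of width $\Delta$ and reading off properties (1)--(3) geometrically and (4) via the convolution theorem---is correct and is precisely the classical Vinogradov smoothing that underlies the cited result.
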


After we have transformed the sums under consideration into exponential sums
we want to split the interval by the following lemma.
\begin{lem}\label{lem:intervalsplit}
Let $I=(a,b]$ be an interval and $F$ be a complex function defined on $I$. If
$\log(b/a)\ll L$, then $I$ is the union of $\ell$ intervals of the type
$(c,d]$ with $\ell\ll L$ and $d\leq 2c$. Furthermore we have
\[ 
  \lvert\sum_{n\in I}F(n)\rvert\ll L\lvert\sum_{n\in(c,d]}F(n)\rvert,
\]
for some such $(c,d]$.
\end{lem}

\begin{proof}
For $i=1,\ldots,\ell$ let $I_i$ be the $\ell$ splitting intervals. Then
\begin{align*}
\lvert \sum_{n\in I}F(n)\rvert
=\lvert\sum_{i=1}^\ell\sum_{n\in I_i}F(n)\rvert
\leq \ell\max_{1\leq i\leq \ell}\lvert\sum_{n\in I_i}F(n)\rvert\ll
L\lvert\sum_{n\in I_i}F(n)\rvert 
\end{align*}
\end{proof}

% \begin{lem}[Vaughan's identity]\label{lem:vaughan:identity}
% Let $U$ and $V$ be any positive numbers and let $\Lambda$ be the von Mangoldt
% function. Furthermore let $g$ be an arithmetic function such that $\lvert
% g(n)\rvert\leq 1$ for each $n$. Then
% \[
%   \sum_{p\leq N}(\log p)g(p)=S_1-S_2-S_3
%   +\mathcal{O}\left(U\log U\right)+\mathcal{O}\left(P^{\frac12}\right),
% \]
% where
% \begin{align}
% \label{mani:S1}
% S_1&=\sum_{d\leq V}\sum_{r\leq\frac Nd}\mu(d)(\log r)g(dr),\\
% \label{mani:S2}
% S_2&=\sum_{d\leq V}\sum_{\ell\leq U}\sum_{m\leq\frac N{\ell
%     d}}\mu(d)\Lambda(\ell)g(d\ell m)\\
% \intertext{and}
% \label{mani:S3}
% S_3&=\sum_{V<k\leq N}\sum_{U<\ell\leq\frac Nk}\sum_{\substack{d\mid k\\d\leq V}}\mu(d)\Lambda(\ell)g(k\ell).
% \end{align}
% \end{lem}

We will apply the following lemma in order to estimate the occurring
exponential sums provided that the coefficients are very small. This corresponds to the
case of the most significant digits in the expansion.

\begin{lem}[{\cite[Lemma 4.19]{titchmarsh1986:theory_riemann_zeta}}]
\label{tit:lem4.19}
Let $F(x)$ be a real function, $k$ times differentiable, and satisfying $\lvert
F^{(k)}(x)\rvert\geq\lambda>0$ throughout the interval $[a,b]$. Then
\[
\lvert\int_a^be(F(x))\mathrm{d}x\rvert
\leq c(k)\lambda^{-1/k}.
\]
\end{lem}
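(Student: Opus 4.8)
The plan is to prove the bound by induction on the order $k$, following the classical argument of van der Corput. The base case $k=1$ is an integration-by-parts (first-derivative) estimate, and the inductive step for $k\geq2$ reduces the order by one at the cost of isolating a short interval around the point where $F^{(k-1)}$ is smallest. Throughout, the structural fact I will exploit is that the hypothesis $\lvert F^{(k)}(x)\rvert\geq\lambda>0$ on an interval, together with continuity of $F^{(k)}$, forces $F^{(k)}$ to have constant sign there, and hence $F^{(k-1)}$ to be strictly monotonic.

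For the base case I would start from $\lvert F'(x)\rvert\geq\lambda$, so that $F'$ has constant sign, and write $e(F(x))=\frac{1}{2\pi iF'(x)}\frac{d}{dx}e(F(x))$. Integrating by parts produces a boundary term bounded by $\frac1{\pi\lambda}$ together with $\frac1{2\pi}\int_a^b\frac{\lvert F''(x)\rvert}{F'(x)^2}\,dx$. When $F'$ is monotonic the integrand $F''/(F')^2$ keeps a constant sign, so this integral telescopes to $\lvert F'(a)^{-1}-F'(b)^{-1}\rvert\leq\lambda^{-1}$, yielding $c(1)=3/(2\pi)$. I note here the one genuine subtlety of the base case: monotonicity of $F'$ is needed for the telescoping (with only $\lvert F'\rvert\geq\lambda$ the total variation of $1/F'$ can be large), and this is exactly the companion hypothesis of the first-derivative test; as explained below it is automatically available every time the $k=1$ case is invoked inside the induction.

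For the inductive step, assuming the statement for order $k-1$, I use that $F^{(k-1)}$ is strictly monotonic on $[a,b]$ and let $c$ be the point where $\lvert F^{(k-1)}\rvert$ attains its minimum. Because $\lvert F^{(k)}\rvert\geq\lambda$, moving a distance $\rho$ away from $c$ increases $\lvert F^{(k-1)}\rvert$ by at least $\lambda\rho$; hence the set where $\lvert F^{(k-1)}\rvert<\lambda\rho$ is a subinterval $J\ni c$ of length at most $2\rho$, and on the at most two complementary intervals one has $\lvert F^{(k-1)}\rvert\geq\lambda\rho$. On $J$ I bound the integral trivially by its length, $\lvert\int_J e(F(x))\,dx\rvert\leq 2\rho$. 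On each complementary interval $F^{(k-1)}$ is continuous and nowhere zero, so again of constant sign (this supplies the monotonicity demanded by the base case when $k-1=1$), and the induction hypothesis applies with $\lambda$ replaced by $\lambda\rho$, contributing $c(k-1)(\lambda\rho)^{-1/(k-1)}$ apiece.

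Collecting the pieces gives $\lvert\int_a^b e(F(x))\,dx\rvert\leq 2\rho+2c(k-1)(\lambda\rho)^{-1/(k-1)}$, and the proof finishes by optimizing in $\rho$: the choice $\rho=\lambda^{-1/k}$ balances the two terms, each becoming a constant multiple of $\lambda^{-1/k}$, so the bound holds with $c(k)=2+2c(k-1)$. I expect the main obstacle to be bookkeeping rather than any deep difficulty: one must track that the monotonicity needed to run the first-derivative test at the bottom of the recursion is regenerated at every level from the sign-definiteness of the relevant derivative, and one must verify that the decomposition into one trivial interval plus two good intervals remains valid even when $c$ falls at an endpoint or $J$ degenerates. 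Once these cases are checked, the scaling $\lambda\mapsto\lambda\rho$ under one differentiation together with the single optimization of $\rho$ delivers the exponent $-1/k$ directly.
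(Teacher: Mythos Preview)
The paper does not give its own proof of this lemma; it is quoted as a black-box tool from Titchmarsh and used without argument. Your sketch is the classical van der Corput induction on $k$ (trivially bound a short interval around the minimum of $\lvert F^{(k-1)}\rvert$, apply the order-$(k-1)$ bound on the two complementary pieces, optimize the cut-off length), which is precisely the proof in Titchmarsh's book, so there is nothing in the paper to compare against. Your flagging of the monotonicity hypothesis needed at the $k=1$ level, and your observation that it is automatically supplied inside the recursion because $\lvert F^{(k)}\rvert\geq\lambda$ forces $F^{(k-1)}$ to be monotone on each subinterval, is correct and is exactly the care one must take; the only caveat is that the lemma as stated for $k=1$ alone (with no monotonicity of $F'$ assumed) is not covered by your base case, but this is a defect of the stated hypothesis rather than of your argument, and in any case the paper only applies the lemma with $k=\lfloor\theta\rfloor\geq1$ coming from a smooth $F$.
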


A standard tool for estimating exponential sums over the primes is Vaughan's
identity. In order to apply this identity we have to rewrite the exponential
sum into a normal one having von Mangoldt's function as weights. Therefore let
$\Lambda$ denote von Mangoldt's function, i.e.,
\[
\Lambda(n)=\begin{cases}
\log p,&\text{if $n=p^k$ for some prime $p$ and an integer $k\geq1$;}\\
0,&\text{otherwise}.
\end{cases}
\]
In the next step % Vaughan's identity in the version of Heath-Brown (cf.
% \cite{mr0678676}) let us
we may subdivide this weighted exponential sum into several sums of Type I and
II. In particular, let $P\geq2$ and $P_1\leq 2P$, then we define Type
I and Type II sums by the expressions
\begin{align}
&\sum_{X<x\leq X_1}a_x\sum_{\substack{Y<y\leq Y_1\\P<xy\leq
    P_1}}f(xy)\label{type:1:sum}\quad(\text{Type I})\\
&\sum_{X<x\leq X_1}a_x\sum_{\substack{Y<y\leq Y_1\\P<xy\leq P_1}}(\log y)f(xy)\notag\\
&\sum_{X<x\leq X_1}a_x\sum_{\substack{Y<y\leq Y_1\\P<xy\leq P_1}}b_yf(xy)\label{type:2:sum}\quad(\text{Type II})
\end{align}
with $X_1\leq 2X$, $Y_1\leq 2Y$, $\lvert a_x\rvert\ll P^\varepsilon$, $\lvert
b_y\rvert\ll P^\varepsilon$ for every $\varepsilon>0$ and
\[
P\ll XY\ll P,
\]
respectively. The following lemma provides the central tool for the subdivision
of the weighted exponential sum.

\begin{lem}[{\cite[Lemma 1]{baker_kolesnik1985:distribution_p_alpha}}]
\label{bakkol:vaughan}
Let $f(n)$ be a complex valued function and $P\geq2$, $P_1\leq 2P$. Furthermore
let $U$, $V$, and $Z$ be positive numbers satisfying
\begin{gather}
2\leq U<V\leq Z\leq P,\\
U^2\leq Z,\quad 128UZ^2\leq P_1,\quad 2^{18}P_1\leq V^3.
\end{gather}
Then the sum
\[
\sum_{P\leq n\leq P_1}\Lambda(n)f(n)
\]
may be decomposed into $\ll(\log P)^6$ sums, each of which is either a Type I
sum with $Y\geq Z$ or a Type II sum with $U\leq Y\leq V$.
\end{lem}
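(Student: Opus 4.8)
The plan is to prove the decomposition by means of Vaughan's identity, followed by a dyadic subdivision of the ranges of the resulting summation variables. Write $F(s)=\sum_{m\le U}\Lambda(m)m^{-s}$ and $G(s)=\sum_{d\le V}\mu(d)d^{-s}$ for the truncations of $-\zeta'/\zeta$ and $1/\zeta$, and start from the algebraic identity
\[
-\frac{\zeta'}{\zeta}
= F-\zeta FG-\zeta'G+\left(-\frac{\zeta'}{\zeta}-F\right)\left(1-\zeta G\right),
\]
which one verifies by multiplying out. Comparing the coefficient of $n^{-s}$ on both sides expresses $\Lambda(n)$, for every $n$, as a sum of four arithmetic terms; since $U^2\le Z\le P$ forces $U<P$, the first term $F$ is supported on $n\le U$ and therefore contributes nothing to $\sum_{P\le n\le P_1}\Lambda(n)f(n)$.

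First I would treat the three remaining terms. The term $-\zeta FG$ contributes $-\sum_{amd=n,\,m\le U,\,d\le V}\Lambda(m)\mu(d)$; grouping $g=md$ (so that $g\le UV\le UZ$ and the coefficient $\sum_{md=g}\Lambda(m)\mu(d)$ is $\ll g^\varepsilon$) leaves the free variable $a=n/g$, and the side condition $P<n$ together with $128UZ^2\le P_1\le 2P$ forces $a>P/(UZ)\ge 64Z\ge Z$. Likewise $-\zeta'G$ contributes $\sum_{ad=n,\,d\le V}(\log a)\mu(d)$, a sum with short variable $d\le V\le Z$, free variable $a>P/V\ge P/Z\ge Z$, and the admissible weight $\log a$. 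After subdividing the range of the short variable into $\ll\log P$ dyadic pieces (as in Lemma~\ref{lem:intervalsplit}) these become Type~I sums, the second carrying the factor $\log y$, with free variable of length $Y\ge Z$, as required.

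The heart of the matter is the fourth term, the genuinely bilinear piece
\[
\sum_{\substack{mk=n\\ m>U,\;k>V}}\Lambda(m)\,h(k)\,f(mk),
\qquad
h(k)=\sum_{\substack{d\mid k\\ d>V}}\mu(d),
\]
with $|h(k)|\ll k^\varepsilon$ and $\Lambda(m)\ll\log P\ll P^\varepsilon$. I would subdivide both $m\sim M$ and $k\sim K$ dyadically, $MK\asymp P$. Blocks with $U\le M\le V$ are already Type~II sums whose inner variable has length in $[U,V]$, hence admissible. The obstruction is the blocks with $M>V$, where both variables exceed $V$: this regime is non-empty precisely because $256Z^2\le P_1$ gives $Z<\sqrt P$, so $V^2\le Z^2<P\le P_1$. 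To resolve it I would expand $h(k)$, writing $k=d\ell$ with $d>V$, and split the range of $d$ at $d=K/Z$. For $d\le K/Z$ the cofactor $\ell=k/d$ has length $\ge Z$ and coefficient $1$, so grouping $x=md$ (coefficient $\ll x^\varepsilon$) yields a Type~I sum with free variable $\ell$ of length $\ge Z$; for $d>K/Z$ the variable $\ell$ is short, and one regroups $x'=m\ell$ against $d$ and iterates the same dichotomy, each step using that in a product of total size $\asymp P\ge 128Z^2$ the longest factor necessarily exceeds $Z$. Tracking the bounded number of Vaughan terms, the at most three variables subdivided dyadically in each, and the auxiliary splittings used for the both-large regime and the $\log$-weight, the total count of resulting Type~I and Type~II sums is $\ll(\log P)^6$.

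The step I expect to be the main obstacle is exactly this last regime, where both factors of $n$ exceed $V$: neither variable lies in the Type~II window $[U,V]$, and the $\Lambda$- and $\mu$-weights block an immediate reclassification as a Type~I sum, so one is forced into the expansion-and-regroup procedure above. The care needed to guarantee that this procedure terminates with admissible ranges, rather than spawning ever more cases, is precisely what dictates the numerical constants $128$ and $2^{18}$ in the hypotheses and the exponent $6$ in the conclusion.
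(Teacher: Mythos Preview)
The paper does not prove this lemma at all; it is quoted verbatim from Baker--Kolesnik and used as a black box. So there is no ``paper's proof'' to compare against, and your sketch is the only argument on the table.

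Your treatment of the first three Vaughan pieces is fine: the check that the free variable in the $-\zeta FG$ and $-\zeta'G$ terms has length at least $Z$ is correct and uses the hypotheses $V\le Z$ and $128UZ^2\le P_1$ exactly as intended. The gap is in the bilinear term when $M>V$ (so both $M,K>V$). After you expand $h(k)=\sum_{d\mid k,\,d>V}\mu(d)$ and pass to $m,d,\ell$, the branch $d\le K/Z$ indeed gives a Type~I sum with free variable $\ell$ of length $\ge Z$. But in the other branch your proposed move, ``regroup $x'=m\ell$ against $d$ and iterate,'' does not land in either admissible class: one has $x'\ge m>V$ and $d>V$, and both carry nontrivial arithmetic weights ($\Lambda$ on $m$ inside $x'$, $\mu$ on $d$). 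Thus the pair $(x',d)$ is not Type~I (the long variable must have weight $1$ or $\log y$) and not Type~II (neither dyadic range meets $[U,V]$). There is also nothing left to ``iterate'': $\mu(d)$ has no further convolution structure to unfold, and replacing $\Lambda$ by $\log$ restricts $m$ to primes, which again fails the Type~I template since $y$ must range over all integers in its dyadic block. Your closing remark that the longest factor exceeds $Z$ is true but irrelevant, because that factor is never the unweighted one.

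This is exactly the case that separates a quick Vaughan sketch from a full proof. What Baker--Kolesnik actually do (and what the numerical constants and the exponent $6$ encode) involves a more careful combinatorial subdivision of the both-large regime so that, after grouping, one of the resulting dyadic blocks genuinely lies in $[U,V]$ or carries trivial weight with length $\ge Z$; this cannot be recovered from the bare regrouping you describe. If you want to complete the argument yourself rather than cite it, you should either follow their decomposition closely or switch to Heath--Brown's identity, which produces $2k$-fold sums whose classification into Type~I/II is mechanical and avoids this obstruction.
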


% and
% $d_k$ be the $k$th iterate of the divisor function. Then we will use the
% following
% \begin{lem}[{\cite[Lemma
%     3]{heath-brown1983:pjateckii_sapiro_prime}}]\label{lem:vaughan:identity}
% Let $f$ be a function supported on $[N/2,N]$ and $U,V,Z$ be any reals such that
% $3\leq U<V<Z<N$, $Z\geq 4U^2$, $X\geq 64Z^2U$ and $V^3\geq32X$. Then
% \[
% \lvert\sum_{N/2<n\leq N}\Lambda(n)f(n)\rvert
% \ll\max\lvert f(n)\rvert+K\log N+L(\log N)^8,
% \]
% where
% \begin{align}
% K&=\sum_{m}d_3(m)\sum_{n>Z}f(mn),\label{mani:K}\\
% L&=\sum_{m}d_4(m)\sum_{U<n<V}b(n)f(mn),\label{mani:L}
% \end{align}
% and $\lvert b(n)\rvert\leq d_3(n)$.
% \end{lem}

% \begin{rem}
% The sum $K$ in \eqref{mani:K} is called a type 1 sum whereas the sum $L$ in
% \eqref{mani:L} is called a type 2 sum.
% \end{rem}

The next tool is an estimation for the exponential sum. After subdividing the
weighted exponential sum we use Vinogradov's method in order to estimate the
occurring unweighted exponential sums.

\begin{lem}[{\cite[Lemma 6]{Nakai_Shiokawa1990:class_normal_numbers}}]
\label{nakshi:lem6}
Let $k$, $P$ and $N$ be integers such that $k\geq2$, $2\leq N\leq P$. Let
$g(x)$ be real and have continuous derivatives up to the $(k+1)$th order in
$[P+1,P+N]$; let $0<\lambda<1/(2c_0(k+1))$ and
\[
  \lambda\leq\frac{g^{(k+1)}(x)}{(k+1)!}\leq c_0\lambda
  \quad(P+1\leq x\leq P+N),
\]
or the same for $-g^{(k+1)}(x)$, and let
\[
N^{-k-1+\rho}\leq\lambda\leq N^{-1}
\]
with $0<\rho\leq k$. Then
\[
  \sum_{n=P+1}^{P+N}e(g(n))\ll N^{1-\eta},
\]
where
\begin{gather}\label{mani:eta}
\eta=\frac{\rho}{16(k+1)L},\quad
L=1+\lf\frac14k(k+1)+kR\rf,\quad
R=1+\lf\frac{\log\left(\frac1\rho k(k+1)^2\right)}{-\log\left(1-\frac1k\right)}\rf.
\end{gather}
\end{lem}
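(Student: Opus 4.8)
The plan is to prove this by Vinogradov's method for exponential sums: the sole hypothesis is a two-sided bound on $g^{(k+1)}$, and this is exactly the datum that Vinogradov's mean value theorem consumes. The parameters $R$ and $L$ in \eqref{mani:eta} should fall out of the argument as, respectively, the number of iterations of Vinogradov's inductive step and the number of variables in the associated system of equations, and $\eta$ as the resulting saving.

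The first move is a localization. On a subinterval $[a,a+m]\subseteq[P+1,P+N]$ Taylor's formula gives
\[
g(a+t)=\sum_{j=0}^{k}\frac{g^{(j)}(a)}{j!}\,t^{j}+\frac{g^{(k+1)}(\xi_t)}{(k+1)!}\,t^{k+1},\qquad 0\le t\le m,
\]
and the hypothesis $\lambda(k+1)!\le\lvert g^{(k+1)}\rvert\le c_0\lambda(k+1)!$ says that the top coefficient has constant order of magnitude $\asymp\lambda$. Hence, up to lower-order terms, $g$ behaves on short intervals like a polynomial of degree $k+1$ whose leading coefficient $\asymp\lambda$ sits, thanks to $N^{-k-1+\rho}\le\lambda\le N^{-1}$, in precisely the minor-arc range that Vinogradov's method requires; this is the role of the two-sided bound on $\lambda$. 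It is this polynomial-like structure, rather than any single Taylor truncation, that will be exploited.

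The core of the argument is then the estimation of the resulting Weyl sum by Vinogradov's mean value theorem. I would run the classical inductive ($p$-adic) procedure: shifting the summation variable over short arithmetic progressions links a high even moment of the sum to the number of solutions of the Vinogradov system $\sum_i(x_i^j-y_i^j)=0$ for $1\le j\le k$, which the mean value theorem bounds by $N^{2s-\frac12 k(k+1)+o(1)}$ once the number $s$ of variable-pairs passes a threshold quadratic in $k$. Iterating this step $R$ times, with each pass improving the operative exponent by a factor $1-\tfrac1k$ — the origin of the term $-\log(1-\tfrac1k)$ in $R$ — and keeping track of the $L\asymp\frac14 k(k+1)+kR$ variables generated, one arrives at $\ll N^{1-\eta}$ with $\eta=\rho/(16(k+1)L)$ as in \eqref{mani:eta}.

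The main obstacle is exactly this core step: establishing the link between the single exponential sum and the Vinogradov system, invoking the mean value theorem in quantitative form, and then propagating the parameters through the induction carefully enough to reproduce the precise values of $\eta$, $L$ and $R$. In particular one must maintain the minor-arc information uniformly across the whole range $[P+1,P+N]$ and make sure the saving is not dissipated in recombining the localized estimates. By contrast the Taylor localization and the bookkeeping of error terms are routine.
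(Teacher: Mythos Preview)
The paper does not prove this lemma at all: it is quoted verbatim as Lemma~6 of Nakai--Shiokawa \cite{Nakai_Shiokawa1990:class_normal_numbers} and used as a black box in Section~\ref{sec:proof-prop-refm}. There is therefore no ``paper's own proof'' to compare against.

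Your outline is in the right spirit---the result in the cited source is indeed obtained by Vinogradov's method, and your reading of the parameters is broadly correct: $R$ counts the iterations of the inductive step (whence the $-\log(1-\tfrac1k)$), $L$ encodes the number of variables in the mean-value system, and $\eta$ is the resulting exponent saving tied to the minor-arc location $N^{-k-1+\rho}\le\lambda\le N^{-1}$. That said, what you have written is a plan, not a proof. The passage from the single sum $\sum e(g(n))$ to a moment governed by the Vinogradov system, the quantitative form of the mean value theorem you invoke, and the bookkeeping that produces the \emph{exact} constants $\tfrac{1}{16(k+1)}$, $\tfrac14 k(k+1)+kR$, and the floor functions in \eqref{mani:eta} are all asserted rather than executed. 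In particular, your Taylor-localization paragraph gestures at approximating $g$ by a polynomial but never specifies the length of the subintervals, how the localized estimates are recombined without losing the power saving, or how the variable top coefficient (only known to lie in $[\lambda,c_0\lambda]$) is handled uniformly. These are precisely the places where the argument can go wrong, and they are where Nakai--Shiokawa (following Karatsuba and Vinogradov) do the real work. If you want a self-contained proof rather than a citation, you would need to fill in those steps; as it stands, your proposal identifies the method correctly but does not constitute a proof.
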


\section{Proof of Proposition \ref{mani:centralprop}}\label{sec:proof-prop-refm}

We will apply the estimates of the preceding sections in order to estimate the
exponential sums occurring in the proof. We will proceed in four steps.
\begin{enumerate}
\item In the first step we use a method of Vinogradov
  \cite{vinogradov2004:method_trigonometrical_sums} in order to rewrite the
  counting function into the estimation of exponential sums. Then we will
  distinguish two cases in the following two steps.
\item First we assume that the we are interested in a block which occurs among
  the most significant digits. This corresponds to a very small coefficient in
  the exponential sum and we may use the method of van der Corput
  (cf. \cite{graham_kolesnik1991:van_der_corputs}).
\item For the blocks occurring among the least significant digits we apply
  Vaughan's identity together with ideas from a recent paper by Bergelson
  et al. \cite{bergelson_kolesnik_madritsch+2012:uniform_distribution_prime}.
\item Finally we combine the estimates of the last two steps in order to end
  the proof.
\end{enumerate}

In this proof, the letter $p$ will always denote a prime and we set
$f(x):=\alpha x^\theta$ for short. Furthermore we set
\begin{gather}\label{mani:delta}
\delta:=\min\left(\frac14,1-\theta\right).
\end{gather}

\subsection{Rewriting the sum}\label{sec:rewriting-sum}
Throughout the rest of the paper we fix a block $d_1\cdots d_\ell$. In order to
count the occurrences of this block in the $q$-ary expansion of $\lfloor f(p)
\rfloor$ ($2\le p \le P$)  we define the indicator function
\begin{align}\label{mani:I}
\mathcal{I}(t)=\begin{cases}
  1, &\text{if }\sum_{i=1}^\ell d_iq^{-i}\leq t-\lfloor t\rfloor
     <\sum_{i=1}^\ell d_iq^{-i}+q^{-\ell};\\
  0, &\text{otherwise;}
     \end{cases}
\end{align}
which is a $1$-periodic function. Indeed, we have
\[
\mathcal{I}(q^{-j}f(n)) = 1 \Longleftrightarrow d_1\cdots d_\ell =
b_{j-1}\cdots b_{j-\ell}.
\]
Thus we can write our block counting function as follows
\begin{gather}\label{mani:NthetatoNstar}
\mathcal{N}^*(f(p))=\sum_{j=l}^J\mathcal{I}\left(q^{-j}f(p)\right).
\end{gather}

% We start with a lemma which essentially goes back to Vinogradov
% \cite{vinogradoviven:method_trigonometrical_sums}. In particular, this lemma provides us with the Fourier
% transform of a slightly changed version of the indicator function.

Following Nakai and Shiokawa~\cite{Nakai_Shiokawa1990:class_normal_numbers} we
want to approximate $\mathcal{I}$ from above and from below by two $1$-periodic
functions having small Fourier coefficients. In particular, we set
$H=N^{\delta/3}$ and
\begin{equation}\label{mani:abd}
\begin{split}
\alpha_-=\sum_{\lambda=1}^\ell d_\lambda q^{-\lambda}+(2H)^{-1},\quad
\beta_-=\sum_{\lambda=1}^\ell d_\lambda q^{-\lambda}+q^{-\ell}-(2H)^{-1},\quad
\Delta_-=H^{-1},\\
\alpha_+=\sum_{\lambda=1}^\ell d_\lambda q^{-\lambda}-(2H)^{-1},\quad
\beta_+=\sum_{\lambda=1}^\ell d_\lambda q^{-\lambda}+q^{-\ell}+(2H)^{-1},\quad
\Delta_+=H^{-1}.
\end{split}
\end{equation}
We apply Lemma \ref{vin:lem12} with
$(\alpha,\beta,\delta)=(\alpha_-,\beta_-,\delta_-)$ and
$(\alpha,\beta,\delta)=(\alpha_+,\beta_+, \delta_+)$,
respectively, in order to get two functions $\mathcal{I}_-$ and
$\mathcal{I}_+$. By the choices of
$(\alpha_\pm,\beta_\pm,\delta_\pm)$ it is immediate that
\begin{equation}\label{uglI}
\mathcal{I}_-(t)\leq\mathcal{I}(t)\leq\mathcal{I}_+(t) \qquad
(t\in\mathbb{R}).
\end{equation}
Lemma \ref{vin:lem12} also implies that these two functions have
Fourier expansions
\begin{align}\label{mani:Ifourier}
\mathcal{I}_\pm(t)=q^{-\ell}\pm H^{-1}+
  \sum_{\substack{\nu=-\infty\\\nu\neq0}}^\infty A_\pm(\nu)e(\nu t)
\end{align}
satisfying
\begin{gather*}%\label{mani:Aest}
\lvert A_\pm(\nu)\rvert
\ll\min(\lvert\nu\rvert^{-1},H\lvert\nu\rvert^{-2}).
\end{gather*}
In a next step we want to replace $\mathcal{I}$ by $\mathcal{I}_+$
in (\ref{mani:NthetatoNstar}). For this purpose we observe, using \eqref{uglI},
that
\begin{gather*}
\lvert\mathcal{I}(t)-\mathcal{I}_+(t)\rvert \le
\lvert\mathcal{I}_+(t)-\mathcal{I}_-(t)\rvert
  \ll H^{-1} + \sum_{\substack{\nu=-\infty\\\nu\neq0}}^\infty
  A_\pm(\nu)e(\nu t).
\end{gather*}
Thus subtracting yields the main part, and summing over $p\leq N$ gives
\begin{gather}\label{mani:0.5}
\lvert\sum_{p\leq N}\mathcal{I}(q^{-j}f(p))-\frac{\pi(N)}{q^{\ell}}\rvert
\ll\pi(N)H^{-1}+\sum_{\substack{\nu=-\infty\\\nu\neq0}}^\infty
A_{\pm}(\nu)\sum_{p\leq N}e\left(\frac{\nu}{q^j}f(p)\right).
\end{gather}

Now we consider the coefficients $A_\pm(\nu)$. Noting
\eqref{mani:A} one observes that
\begin{gather*}
A_\pm(\nu)\ll\begin{cases}
  \nu^{-1},       &\text{for }\lvert\nu\rvert\leq H;\\
  H\nu^{-2}, &\text{for }\lvert\nu\rvert>H.
          \end{cases}
\end{gather*}
Estimating all summands with $\lvert\nu\rvert>H$ trivially we get
\begin{gather*}%\label{mani:Aest}
\sum_{\substack{\nu=-\infty\\\nu\neq0}}^\infty
  A_\pm(\nu)e\left(\frac{\nu}{q^j}f(p)\right)
\ll\sum_{\nu=1}^{H}\nu^{-1}e\left(\frac{\nu}{q^j}f(p)\right)+H^{-1}.
\end{gather*}
Using this in \eqref{mani:0.5} yields
\begin{gather}\label{mani:1.5}
\lvert\sum_{p\leq N}\mathcal{I}(q^{-j}f(p))-\frac{\pi(N)}{q^{\ell}}\rvert
\ll\pi(N)H^{-1}+\sum_{\nu=1}^{H}
\nu^{-1}\sum_{p\leq N}e\left(\frac{\nu}{q^j}f(p)\right).
\end{gather}

Finally we sum over all $j$s and get
\begin{equation}\label{mani:2}
\begin{split}
\lvert\sum_{p\leq N}\mathcal{N}^*(f(p))-\frac{\pi(N)}{q^{\ell}}J\rvert
\ll\pi(N)H^{-1}J+\sum_{j=\ell}^J\sum_{\nu=1}^{H}
\nu^{-1}S(N,j,\nu),
\end{split}
\end{equation}
where we have set
\[
S(N,j,\nu):=\sum_{p\leq N}e\left(\frac{\nu}{q^j}f(p)\right).
\]

The crucial part is the estimation of the exponential sums over the
primes. In the following we will distinguish two cases according to the size of
$j$. This corresponds to the position in the expansion of $f(p)$. In
particular, let $\rho>0$ be arbitrarily small then we want to distinguish
between the most significant digits and the least significant digits,
i.e., between the ranges
\[
1\leq q^j\leq N^{\theta-1+\rho}
\quad\text{and}\quad
N^{\theta-1+\rho}<q^j\leq N^\theta.
\]

\subsection{Most significant digits}
In this subsection we assume that
\[
N^{\theta-1+\rho}<q^j\leq N^\theta,
\]
which means that we deal with the most significant digits in the expansion. We
start by rewriting the sum into an integral.
\begin{align*}
S(N,j,\nu)=\sum_{p\leq N}e\left(\frac{\nu}{q^j}f(p)\right)
=\int_{2}^{N}e\left(\frac{\nu}{q^j}f(t)\right)\mathrm{d}\pi(t)+\mathcal{O}(1).
\end{align*}
In the second step we then apply the prime number theorem. Thus
\begin{align*}
S(N,j,\nu)
=\int_{N(\log N)^{-G}}^{N}
e\left(\frac{\nu}{q^j}f(t)\right)
\frac{\mathrm{d}t}{\log t}
+\mathcal{O}\left(\frac{N}{(\log N)^G}\right).
\end{align*}
Now we use the second mean-value theorem together with Lemma \ref{tit:lem4.19}
and $k=\lf\theta\rf$ to get
\begin{equation}\label{mani:res:most}
\begin{split}
S(N,j,\nu)&\ll\frac1{\log N}\sup_{\xi}
  \lvert\int_{N(\log N)^{-G}}^{\xi}e\left(\frac{\nu}{q^j}f(t)\right)\mathrm{d}t\rvert
  +\mathcal{O}\left(\frac{N}{(\log N)^G}\right)\\
&\ll\frac1{\log N}\left(\frac{\lvert \nu\rvert}{q^j}\right)^{-\frac1k}
  +\mathcal{O}\left(\frac{N}{(\log N)^G}\right).
\end{split}
\end{equation}

\subsection{Least significant digits}
For the digits in this range we want to apply Vaughan's identity in order to
transfer the sum over the primes into two special types of sums involving
products of integers. Before we may apply Vaughan's identity we have to weight
the exponential sum under consideration by the von Mangoldt function. By an
application of Lemma \ref{lem:intervalsplit}, it suffices to consider an
interval of the form $(P,2P]$. Thus
\[
\lvert S(N,j,\nu)\rvert
% =\lvert\sum_{p\leq N}e\left(f(p)\right)\rvert
\ll(\log N)\lvert\sum_{P<p\leq2P}e\left(f(p)\right)\rvert.
\]
Using partial summation we get
\[
\lvert S(N,j,\nu)\rvert
\ll(\log N)
\lvert\sum_{P<p\leq 2P}e\left(f(p)\right)\rvert
\ll (\log N)P^{\frac12}+(\log N)\lvert\sum_{P<n\leq P_1}\Lambda(n)e\left(f(n)\right)\rvert
\]
for some $P_1$ with $P<P_1\leq 2P$. From now on we may assume that
$P>N^{1-\eta}$. 

% Then an application of partial summation yields
% \[
% S(N,r,\nu)
% =\frac1{\log N}\sum_{p\leq N}(\log p)e\left(\frac{\nu}{q^{j}} f(p)\right)
% +\int_2^N\left(\sum_{p\leq t}(\log p)e\left(\frac{\nu}{q^{j}} f(p)\right)
% \right)\frac{\mathrm{d}t}{t(\log t)^2}.
% \]
% Now we cut the integration at $\sqrt{N}$ and use $\sum_{p\leq
%   t}\log p=\mathcal{O}(t)$ in order to estimate the lower part. Thus we get
% with partial integration
% \begin{align*}
% S(N,r,\nu)
% &\ll\left(\frac1{\log N}+\int_{\sqrt{N}}^N\frac{\mathrm{d}t}{t(\log
%     t)^2}\right)\max_{\sqrt N<t\leq N}\lvert\sum_{p\leq t}(\log p)e\left(\frac{\nu}{q^{j}} f(p)\right)
% \rvert+\sqrt{N}\\
% &=\frac2{\log N}\max_{\sqrt N<t\leq N}\lvert\sum_{p\leq t}(\log p)e\left(\frac{\nu}{q^{j}} f(p)\right)
% \rvert+\sqrt{N}.
% \end{align*}
% Finally we note that
% \[
% \lvert\sum_{n\leq t}\Lambda(n)e\left(\frac{\nu}{q^{j}} f(n)\right)
%   -\sum_{p\leq t}(\log p)e\left(\frac{\nu}{q^{j}} f(p)\right)\rvert
% \leq\sum_{p\leq\sqrt{P}}\lf\frac{\log P}{\log p}\rf(\log p)
% \leq\pi(\sqrt{P})\log P\ll\sqrt{P},
% \]
% which yields
% \begin{gather}\label{mani:SbeforeVaughan}
% S(P,r,\nu)
% \ll\frac2{\log P}\max_{\sqrt P<t\leq P}\lvert\sum_{n\leq t}\Lambda(n)e\left(\frac{\nu}{q^{j}} f(n)\right)
% \rvert+\sqrt{P}.
% \end{gather}

% First we split the weighted exponential sum in \eqref{mani:SbeforeVaughan} into
% $\ll\log P$ sub-sums of the form
% \[
% \lvert\sum_{T<n\leq 2T}\Lambda(n)e\left(\frac{\nu}{q^{j}} f(n)\right)\rvert
% \]
% with $2T\leq P$, where we may assume that $T\gg P^{1-\eta}$.

Then an application of Lemma \ref{bakkol:vaughan} with $U=P^{\frac\delta3}$,
$V=P^{\frac13}$, $Z=P^{\frac12-\frac\delta3}$ yields 
\begin{align}\label{mani:afterVaughan}
S(N,j,\nu)\ll P^{\frac12}+\left(\log P\right)^7\lvert S_1\rvert,
\end{align}
where $S_1$ is either a Type I sum as in \eqref{type:1:sum} with
$Y\geq P^{\frac12-\frac\delta3}$ or a Type II sum as in \eqref{type:2:sum} with
\[
P^{\frac\delta3}\leq Y\leq P^{\frac13}.
\]

%In the following two subsection we will deal with both cases separately.

%\subsubsection{Type II sum.}
Suppose first that $S_1$ is a Type II sum, i.e.,
\[
S_1=\sum_{X<x\leq X_1}a_x\sum_{\substack{Y<y\leq Y_1\\P<xy\leq
    P_1}}b_ye\left(f(xy)\right).
\]
Then an application of the Cauchy-Schwarz inequality yields
\begin{align*} 
\lvert S_1\rvert^2
&\leq\sum_{X<x\leq X_1}\lvert a_x\rvert^2\sum_{X<x\leq X_1}
  \lvert\sum_{\substack{Y<y\leq Y_1\\P<xy\leq P_1}}b_ye\left(\frac{\nu}{q^j}f(xy)\right)\rvert^2\\
&\ll XP^{2\varepsilon}\sum_{Y<y\leq Y_1}\sum_{Y<z\leq Y_1}b_y\overline{b_z}
  \sum_{\substack{X<x\leq X_1\\P<xy,xz\leq P_1}}e\left(\frac{\nu}{q^j}\left(f(xy)-f(xz)\right)\right),
\end{align*}
where we have used that $\lvert a_x\rvert\ll P^\varepsilon$. Collecting all the
terms where $y=z$ and using $\lvert b_y\rvert\ll P^\varepsilon$ yields 
\begin{gather}\label{mani:3.5}
\lvert S_1\rvert^2\ll XP^{4\varepsilon}\left(XY+\sum_{Y<y<z\leq Y_1}
  \lvert\sum_{\substack{X<x\leq X_1\\P<xy,xz\leq P_1}}e\left(\frac{\nu}{q^j}\left(f(xy)-f(xz)\right)\right)\rvert\right).
\end{gather}

% We divide the sum over the pairs $(y,z)$ into two parts according to whether
% \begin{gather}\label{mani:Z}
% y-z<Y^{1-\theta}
% \end{gather}
% or not. Since $y-z\geq1$ we note that the first partial sum might be
% empty. Then by trivially estimating the sum for $y-z<Y^{1-\theta}$ we have
% \begin{equation*}\begin{split}
% \sum_{Y<y<z\leq Y_1}\lvert\sum_{\substack{X<x\leq X_1\\P<xy,xz\leq
%     P_1}}e(f(xy)-f(xz))\rvert
% % &\quad=\sum_{\substack{Y<y<z\leq Y_1\\y-z<Z}}\lvert\sum_{\substack{X<x\leq X_1\\P<xy,xz\leq
% %     P_1}}e(f(xy)-f(xz))\rvert+
% % \sum_{\substack{Y<y<z\leq Y_1\\y-z\geq Z}}\lvert\sum_{\substack{X<x\leq X_1\\P<xy,xz\leq
% %     P_1}}e(f(xy)-f(xz))\rvert\\
% =XY^{2-\theta}+
% \sum_{\substack{Y<y<z\leq Y_1\\y-z\geq Y^{1-\theta}}}\lvert\sum_{\substack{X<x\leq X_1\\P<xy,xz\leq
%     P_1}}e(f(xy)-f(xz))\rvert.
% \end{split}\end{equation*}
% Plugging this into \eqref{mani:3.5} yields
% \[
% \lvert S_1\rvert^2\ll P^{2+4\varepsilon}Y^{-\theta}+P^{4\varepsilon}X
% \sum_{\substack{Y<y<z\leq Y_1\\y-z\geq Y^{1-\theta}}}\lvert\sum_{\substack{X<x\leq X_1\\P<xy,xz\leq
%     P_1}}e\left(\frac{\nu}{q^j}\left(f(xy)-f(xz)\right)\right)\rvert.
% \]

There must be a pair $(y,z)$ with % $y-z\geq Y^{1-\theta}$ and
$Y<y<z<Y_1$ such that
\begin{gather}\label{mani:4.5}
\lvert S_1\rvert^2\ll  P^{2+4\varepsilon}Y^{-1}+P^{4\varepsilon}XY^2
  \lvert\sum_{X_2<x\leq X_3}e(g(x))\rvert,
\end{gather}
where $X_2=\max(X,Py^{-1})$, $X_3=\min(X_1,P_1z^{-1})$ and
\[
g(x)
=\frac{\nu}{q^j}\left(f(xy)-f(xz)\right)
=\frac{\nu}{q^j}\alpha(y^\theta-z^\theta)x^\theta.
\]

We will apply Lemma \ref{nakshi:lem6} to estimate the exponential
sum. Setting
\[k:=\lc 2\theta\rc+1
\]
we get that $g^{(k+1)}(x)\sim
\nu q^{-j}\alpha\theta(\theta-1)\cdots(\theta-k)x^{\theta-(k+1)}$.
Thus
\[
\lambda\leq\frac{g^{(k+1)}(x)}{(k+1)!}\leq c_0\lambda\quad(X_2<x\leq X_3)
\]
or similarly for $-g^{(k+1)}(x)$, where
\[
\lambda=c\nu q^{-j}\alpha(y^{\theta}-z^{\theta})X^{\theta-(k+1)}
\]
and $c$ depends only on $\theta$ and $\alpha$.

% Now, since $y-z\geq Y^{1-\theta}$ we get that,
% \begin{align*}
%   Y^{\theta-1}\leq y^{\theta}-z^{\theta}\leq Y^{\theta} &\quad\text{if }\theta>1,\\
%   1\leq y^{\theta}-z^{\theta}\leq Y^{\theta} &\quad\text{if }\theta<1.
% \end{align*}
% Then we get as lower bound for $\lambda$
% \begin{align*}
% \lambda&\geq PY^{\theta_t-1}X^{\theta_t-(k_\ell+1)}\geq X^{-k_\ell}
% &\quad\text{if }\theta_t>1,\\
% \lambda&\geq X^{\theta_t-(k_\ell+1)}\geq X^{-k_\ell-1+\theta_t}
% &\quad\text{if }\theta_t<1.
% \end{align*}
% As upper bound we get in both cases
% \[
% \lambda
% \leq P^{2\delta/3}Y^{\theta_t}X^{\theta_t-(k_\ell+1)}
% \ll P^{\theta_t+2\delta/3}X^{-(k_\ell+1)}
% \leq X^{-1}.
% \]

Since $\theta>1$ we get
\begin{align*}
\lambda&\geq P^{\delta-\theta}Y^{\theta-1}X^{\theta-(k+1)}\geq X^{-k-\frac12}.
\end{align*}
Similarly we obtain
\[
\lambda
\leq P^{2\delta}Y^{\theta}X^{\theta-(k+1)}
\ll P^{\theta+2\delta}X^{-(k+1)}
\leq X^{-1}.
\]
Thus we get that $X^{-k-\frac12}\leq\lambda\leq X^{-1}$. Therefore an application of Lemma \ref{nakshi:lem6} yields
\[
\sum_{X_2<x\leq X_3}e(g(x))\ll X^{1-\eta},
\]
where $\eta$ depends only on $k$ an therefore on $\theta$. Inserting this in 
\eqref{mani:4.5} we get
\begin{gather}\label{mani:res:typeII}
\lvert S_1\rvert^2\ll P^{2+4\varepsilon}Y^{-1}+P^{4\varepsilon}XY^2X^{1-\eta}
\ll P^{2+4\varepsilon}\left(P^{-\delta/3}+P^{-2\eta/3}\right).
\end{gather}

%\subsubsection{Type I sum.}
The case of $S_1$ being a type I sum is similar but simpler. We have
\begin{align*}
\lvert S\rvert
\leq\sum_{X<x\leq X_1}\lvert a_x\rvert
  \lvert\sum_{\substack{Y<y\leq Y_1\\P<xy\leq P_1}}(\log y)e\left(f(xy)\right)\rvert
\ll XP^{\varepsilon}\lvert\sum_{\substack{Y<y\leq Y_1\\P<xy\leq P_1}}(\log y)e\left(f(xy)\right)\rvert
\end{align*}
for some $x$ with $X<x\leq X_1$. By a partial summation we get
\begin{gather}\label{mani:6}
  \lvert S\rvert\ll XP^\varepsilon\log P\lvert\sum_{\substack{Y_2<y\leq
      Y_3\\P<xy\leq P_1}}e\left(f(xy)\right)\rvert
\end{gather}
for some $Y\leq Y_2<Y_3\leq Y_1$. Now we set
\[
g(y)
=f(xy)
=\frac{\nu}{q^{j}}\alpha x^\theta y^\theta.
\]

Again the idea is to apply Lemma \ref{nakshi:lem6} for the estimation of the
exponential sum. We set
\[
k:=\lc 3\theta\rc +2
\]
and get for the $k+1$-st derivative 
\[
  \lambda\leq\frac{g^{(k+1)}(x)}{(k+1)!}\leq c_0\lambda\quad(X_2<x\leq X_3)
\]
or similarly for $-g^{(k+1)}(x)$, where
\[
\lambda=c\frac{\nu}{q^j}\alpha x^{\theta}Y^{\theta-(k+1)}
\]
and $c$ again depends only on $\alpha$ and $\theta$.
% Let $\delta:=\log_Pc$, then for $P\to\infty$ we get
% that $\delta\to0$.
We may assume that $N$ and hence $P$ is sufficiently large, then we get that
\[
Y^{-(k+1)}\ll P^{-\theta}X^{\theta}Y^{\theta-(k+1)}\leq
\lambda\leq
P^{2\delta}X^{\theta}Y^{\theta-(k+1)}\ll P^{\theta+2\delta}Y^{-(k+1)}\leq Y^{-1}.
\]
Now an application of Lemma 2.5 yields
\[
\sum_{Y_2<y\leq Y_3}e(g(y))\ll Y^{1-\eta},
\]
where $\eta$ depends only on $k$ and thus on $\theta$. Inserting this in
\eqref{mani:6}  we get
\begin{gather}\label{mani:res:typeI}
  \lvert S_1\rvert \ll(\log P)XP^\varepsilon Y^{1-\eta}\ll(\log P)P^{1+\varepsilon-\eta(1/2-\delta/3)}.
\end{gather}

%\subsubsection{Combining the two cases}
Combining \eqref{mani:res:typeI} and \eqref{mani:res:typeII} in
\eqref{mani:afterVaughan} yields
\begin{equation}\label{mani:res:least}
\begin{split}
\lvert S(N,j,\nu)\rvert
&\ll P^{\frac12}+\left(\log
  P\right)^7\left(P^{1+2\varepsilon}\left(P^{-\delta/6}+P^{-\eta/3}\right)+(\log
P)P^{1+\varepsilon-\eta(1/2-\delta/3)}\right)\\
&\ll P^{\frac12}+\left(\log P\right)^8P^{1-\sigma}.
\end{split}
\end{equation}

\subsection{Conclusion}
On the one hand summing \eqref{mani:res:most} over $j$ and $\nu$ yields
\begin{align*}
&\sum_{1\leq\lvert
  \nu\rvert\leq\delta^2}\lvert\nu\rvert^{-1}\sum_{N^{\theta-\delta}<q^{j}\leq
  N^{\theta}}
  S(N,j,\nu)\\
&\quad\ll\sum_{1\leq\lvert
  \nu\rvert\leq\delta^2}\lvert\nu\rvert^{-1}\sum_{N^{\theta-\delta}<q^{j}\leq
  N^{\theta}}
  \left(\frac1{\log N}\left(\frac{\lvert \nu\rvert}{q^j}\right)^{-\frac1k}
  +\mathcal{O}\left(\frac{N}{(\log N)^G}\right)\right)\\
&\quad\ll\frac1{\log N}\sum_{1\leq\lvert
  \nu\rvert\leq\delta^2}\lvert\nu\rvert^{-1-\frac1k}\sum_{N^{\theta-\delta}<q^{j}\leq
  N^{\theta}}q^{-\frac jk}
  +\mathcal{O}\left(\frac{N}{(\log N)^{G-2}}\right)\\
&\quad\ll\frac{N}{\log N}.
\end{align*}

On the other hand in \eqref{mani:res:least} we sum over $j$ and $\nu$ and get
\[
\sum_{1\leq\lvert\nu\rvert\leq\delta^2}\lvert\nu\rvert^{-1}
\sum_{q^\ell\leq q^j\leq N^\theta}S(N,j,\nu)
\ll(\log N)^2N^{\frac12}+(\log N)^{10}N^{1-\sigma'}.
\]

Combining these estimates in \eqref{mani:2} finally yields
\begin{align*}
\lvert\sum_{p\leq
  N}\mathcal{N}^*(f(p))-\frac{\pi(N)}{q^{\ell}}J\rvert\ll\frac{N}{\log N}
\end{align*}
and the proposition is proved.

\section*{Acknowledgment}
The authors thank the anonymous referee, who read very carefully the manuscript and his/her suggestions improve considerably the presentation of the results.

%\bibliographystyle{new}
%\bibliography{literatur}

\begin{thebibliography}{10}

\bibitem{allouche_shallit2003:automatic_sequences}
J.-P. Allouche and J.~Shallit.
\newblock {\em Automatic Sequences}.
\newblock Cambridge University Press, Cambridge, 2003.
\newblock Theory, applications, generalizations.

\bibitem{baker_kolesnik1985:distribution_p_alpha}
R.~C. Baker and G.~Kolesnik.
\newblock On the distribution of {$p^\alpha$} modulo one.
\newblock {\em J. Reine Angew. Math.} {\bf 356} (1985), 174--193.

\bibitem{balog1983:fractional_part_p}
A.~Balog.
\newblock On the fractional part of {$p^{\theta }$}.
\newblock {\em Arch. Math. (Basel)} {\bf 40}(5) (1983), 434--440.

\bibitem{balog1985:distribution_p_heta}
A.~Balog.
\newblock On the distribution of {$p^\theta\,{\rm mod}\, 1$}.
\newblock {\em Acta Math. Hungar.} {\bf 45}(1-2) (1985), 179--199.

\bibitem{bergelson_kolesnik_madritsch+2012:uniform_distribution_prime}
V.~Bergelson, G.~Kolesnik, M.~Madritsch, Y.~Son, and
  R.~Tichy.
\newblock Uniform distribution of prime powers and applications to van der
  corput sets.
\newblock {\em submitted}  (2012), year.

\bibitem{Borel1909:les_probabilites_denombrables}
E.~Borel.
\newblock {Les probabilit\'es d\'enombrables et leurs applications
  arithm\'etiques.}
\newblock {\em Palermo Rend.} {\bf 27} (1909), 247--271.

\bibitem{cao_zhai1999:distribution_p_alpha}
X.~Cao and W.~Zhai.
\newblock On the distribution of {$p^\alpha$} modulo one.
\newblock {\em J. Th\'eor. Nombres Bordeaux} {\bf 11}(2) (1999), 407--423.

\bibitem{Champernowne1933:construction_decimals_normal}
D.G. Champernowne.
\newblock {The construction of decimals normal in the scale of ten}.
\newblock {\em J. Lond. Math. Soc.} {\bf 8} (1933), 254--260.

\bibitem{copeland_erdoes1946:note_on_normal}
A.~H. Copeland and P.~Erd{\"o}s.
\newblock Note on normal numbers.
\newblock {\em Bull. Amer. Math. Soc.} {\bf 52} (1946), 857--860.

\bibitem{davenport_erdoes1952:note_on_normal}
H.~Davenport and P.~Erd{\"o}s.
\newblock Note on normal decimals.
\newblock {\em Canadian J. Math.} {\bf 4} (1952), 58--63.

\bibitem{drmota_tichy1997:sequences_discrepancies_and}
M.~Drmota and R.~F. Tichy.
\newblock {\em Sequences, Discrepancies and Applications}, Vol. 1651 of {\em
  Lecture Notes in Mathematics}.
\newblock Springer-Verlag, Berlin, 1997.

\bibitem{Dumont_Thomas1994:modifications_de_nombres}
J.-M. Dumont and A.~Thomas.
\newblock Modifications de nombres normaux par des transducteurs.
\newblock {\em Acta Arith.} {\bf 68}(2) (1994), 153--170.

\bibitem{graham_kolesnik1991:van_der_corputs}
S.~W. Graham and G.~Kolesnik.
\newblock {\em Van der {C}orput's Method of Exponential Sums}, Vol. 126 of {\em
  London Mathematical Society Lecture Note Series}.
\newblock Cambridge University Press, Cambridge, 1991.

\bibitem{harman1983:distribution_sqrtp_modulo}
G.~Harman.
\newblock On the distribution of {$\sqrt{p}$} modulo one.
\newblock {\em Mathematika} {\bf 30}(1) (1983), 104--116.

\bibitem{heppner1976:uber_die_summe}
E.~Heppner.
\newblock \"{U}ber die {S}umme der {Z}iffern nat\"urlicher {Z}ahlen.
\newblock {\em Ann. Univ. Sci. Budapest. E\"otv\"os Sect. Math.} {\bf 19}
  (1976), 41--43 (1977).

\bibitem{katai1977:sum_digits_primes}
I.~K{\'a}tai.
\newblock On the sum of digits of primes.
\newblock {\em Acta Math. Acad. Sci. Hungar.} {\bf 30}(1--2) (1977), 169--173.

\bibitem{kaufman1979:distribution_surd_p}
R.~M. Kaufman.
\newblock The distribution of {$\{\surd p\}$}.
\newblock {\em Mat. Zametki} {\bf 26}(4) (1979), 497--504, 653.

\bibitem{madritsch2012:summatory_function_q}
Manfred~G. Madritsch.
\newblock The summatory function of $q$-additive functions on pseudo-polynomial
  sequences.
\newblock {\em J. Th\'eor. Nombres Bordeaux} {\bf 24} (2012), 153--171.

\bibitem{Madritsch_Thuswaldner_Tichy2008:normality_numbers_generated}
M.~G. Madritsch, J.~M. Thuswaldner, and R.~F. Tichy.
\newblock Normality of numbers generated by the values of entire functions.
\newblock {\em J. Number Theory} {\bf 128}(5) (2008), 1127--1145.

\bibitem{Nakai_Shiokawa1990:class_normal_numbers}
Y.~Nakai and I.~Shiokawa.
\newblock A class of normal numbers.
\newblock {\em Japan. J. Math. (N.S.)} {\bf 16}(1) (1990), 17--29.

\bibitem{Nakai_Shiokawa1992:discrepancy_estimates_class}
Y.~Nakai and I.~Shiokawa.
\newblock Discrepancy estimates for a class of normal numbers.
\newblock {\em Acta Arith.} {\bf 62}(3) (1992), 271--284.

\bibitem{Nakai_Shiokawa1997:normality_numbers_generated}
Y.~Nakai and I.~Shiokawa.
\newblock Normality of numbers generated by the values of polynomials at
  primes.
\newblock {\em Acta Arith.} {\bf 81}(4) (1997), 345--356.

\bibitem{schiffer1986:discrepancy_normal_numbers}
J.~Schiffer.
\newblock Discrepancy of normal numbers.
\newblock {\em Acta Arith.} {\bf 47}(2) (1986), 175--186.

\bibitem{schoissengeier1978:neue_diskrepanz_fuer}
J.~Schoissengeier.
\newblock Eine neue {D}iskrepanz f\"ur gewisse {P}rimzahlfolgen.
\newblock {\em \"Osterreich. Akad. Wiss. Math.-Natur. Kl. Sitzungsber. II} {\bf
  187}(4-7) (1978), 219--224.

\bibitem{schoissengeier1979:connection_between_zeros}
J.~Schoissengeier.
\newblock The connection between the zeros of the {$\zeta $}-function and
  sequences {$(g(p))$}, {$p$} prime, {${\rm mod}\,1$}.
\newblock {\em Monatsh. Math.} {\bf 87}(1) (1979), 21--52.

\bibitem{shiokawa1974:sum_digits_prime}
I.~Shiokawa.
\newblock On the sum of digits of prime numbers.
\newblock {\em Proc. Japan Acad.} {\bf 50} (1974), 551--554.

\bibitem{srinivasan_tichy1993:uniform_distribution_prime}
S.~Srinivasan and R.~F. Tichy.
\newblock Uniform distribution of prime power sequences.
\newblock {\em Anz. \"Osterreich. Akad. Wiss. Math.-Natur. Kl.} {\bf 130}
  (1993), 33--36 (1994).

\bibitem{titchmarsh1986:theory_riemann_zeta}
E.~C. Titchmarsh.
\newblock {\em The Theory of The {R}iemann Zeta-Function}.
\newblock The Clarendon Press, Oxford University Press, New York, second
  edition, 1986.
\newblock Edited and with a preface by D. R. Heath-Brown.

\bibitem{tolev1991:simultaneous_distribution_fractional}
D.~I. Tolev.
\newblock On the simultaneous distribution of the fractional parts of different
  powers of prime numbers.
\newblock {\em J. Number Theory} {\bf 37}(3) (1991), 298--306.

\bibitem{vinogradov2004:method_trigonometrical_sums}
I.~M. Vinogradov.
\newblock {\em The Method of Trigonometrical Sums in The Theory of Numbers}.
\newblock Dover Publications Inc., Mineola, NY, 2004.
\newblock Translated from the Russian, revised and annotated by K. F. Roth and
  Anne Davenport, Reprint of the 1954 translation.

\bibitem{zhai2001:simultaneous_distribution_fractional}
W.~Zhai.
\newblock On the simultaneous distribution of the fractional parts of different
  powers of prime numbers.
\newblock {\em J. Number Theory} {\bf 86}(1) (2001), 133--155.

\end{thebibliography}

\def\cprime{$'$}

\end{document}